\newtheorem{definition}{Definition}
\newtheorem{lemma}{Lemma}[section]
\newtheorem{theorem}[lemma]{Theorem}
\newtheorem{cor}[lemma]{Corollary}
\newtheorem{claim*}{Claim}
\newtheorem{remark}[lemma]{Remark}
\newtheorem{example}[lemma]{Example}
\numberwithin{equation}{section}
\newcommand{\Hom}{\operatorname{Hom}}
\newcommand{\C}{\operatorname{Cone}}
\newcommand{\Sym}{\operatorname{Sym}}
\newcommand{\AC}{\mathcal{AC}}
\newcommand{\MC}{\mathcal{MC}}
\newcommand{\pp}{\mathbb{P}}
\newcommand{\rr}{\mathbb{R}}
\definecolor{cof}{RGB}{219,144,71}
\definecolor{pur}{RGB}{186,146,162}
\definecolor{greeo}{RGB}{91,173,69}
\definecolor{greet}{RGB}{52,111,72}
\begin{document}
\title{Linearization functors on real convex sets.}
\date{}
\author{
Mauricio Velasco\\ 
}
\address{
Departamento de matem\'aticas\\
Universidad de los Andes\\
Carrera $1^{\rm ra}\#18A-12$\\ 
Bogot\'a, Colombia
}
\email{mvelasco@uniandes.edu.co}
\subjclass[2000]{Primary 52A27 
Secondary 90C25.} 
\keywords{Linearization functors, Spectrahedra, SDR sets}

\begin{abstract}
We prove that linearizing certain families of polynomial optimization problems leads to new functorial operations in real convex sets. We show that these operations can be computed or approximated in ways amenable to efficient computation. These operations are convex analogues of Hom functors, tensor products, symmetric powers, exterior powers and general Schur functors on vector spaces and lead to novel constructions even for polyhedra.
\end{abstract}

\maketitle

\section{Introduction}

Convex polynomial optimization is concerned with the problem of determining the maximum value of a real polynomial function $f$ on a real convex set $C$. In the special case when the polynomial is linear and the convex set is an SDR set  (i.e. a projection of a spectrahedron) this problem can be solved numerically very efficiently, in polynomial time on the length of the description of $C$ (see for instance~\cite{NN}). 

In this context it is natural to ask whether we can {\it linearize} arbitrary polynomial optimization problems, that is, whether we can construct a {\it linear} function $F(f)$ and a convex domain $F(C)$ with $\max_{x\in C} f(x)=\max_{y\in F(C)}F(f)(y)$. The main result of this article is to answer this question affirmatively for various classes of non-linear problems. To do so we introduce several {\it linearization functors} on real convex sets which are the convex analogues of tensor and symmetric powers and more generally Schur functors on vector spaces. These operations give us a procedure to build the functions $F(f)$ as well as the new convex domains $F(C)$. 

The scope of this method depends on whether the sets $F(C)$ admit descriptions amenable to efficient computation.  and one of the main results of this article is the construction of arbitrarily accurate approximation schemes for the sets $F(C)$ via projections of spectrahedra.

As in the category of vector spaces the objects which linearize certain families of maps are most clearly understood in terms of ``universal properties". To this end we introduce the categories of almost-compact cones and of marked cones. We show that our linearization functors can be understood as solutions to universal problems in these categories. Our categorical point of view extends the approach pioneered by Ziegler~\cite[$\S$ 9.4]{Z} and Bogart, Contois, Gubeladze~\cite{BCG} from polyhedral cones to general convex cones. In particular, our results extend the results of~\cite{BCG} on tensor products and Hom functors. In more detail, the contributions and organization of this article can be summarized as follows:
\begin{enumerate}
\item{ In Section $\S$\ref{Categories} we introduce the categories of almost compact cones and of marked cones. We define hom functors, tensor products and symmetric power operations in these categories. In Section~$\S$\ref{Schur functors} we define, for each partition $\lambda$ a Schur functor $\mathbb{S}_{\lambda}$ in the category of marked cones.}
\item{ In Section $\S$\ref{Sec: Functors} we show that the functors in the previous paragraph arise as solutions of linearization problems (Theorem~\ref{Lemma: Univ}) and thus give us a new approach to several nonlinear polynomial optimization problems. We also study the facial structure of the cones obtained by applying linearization functors (see Theorem~\ref{Faces}). As an application we show in Example~\ref{TSP} that natural nonlinear extensions of the traveling salesman problem lead to families of new polytopes satisfying the universality property of Billera and Sarangarajan (i.e. contain faces isomorphic to every $\{0,1\}$ polytope).}
\item{ In Sections $\S$\ref{Sec: Approx} and $\S$\ref{Sec: Computing} we address the question of how to compute linearization functors. This is done in two ways:
\begin{enumerate}
\item{In Section $\S$\ref{Sec: Approx} we introduce a general approximation scheme for polynomial images of compact convex sets which support a measure via projections of spectrahedra. This scheme induces a hierarchy of relaxations which we show converges to the desired set. Since all linearization functors are polynomial this method gives a way to approximate them, with arbitrary precision, via projections of spectrahedra.}
\item{In Section $\S$\ref{Sec: Computing} we focus on computing linearization functors in the case of spectrahedral and SDR cones. The behavior of linearization functors on morphisms allows us, in some cases, (see Theorem~\ref{morp}) to reduce this computation to that of linearization functors applied to the PSD cones  $S_+(V)$ of positive semidefinite quadratic forms on the vector space $V$.}
\end{enumerate}
}
\item{Finally in Section $\S$\ref{Sec: PSD} we study the convex geometry of the cones obtained by applying linearization functors to PSD cones. We show that the cones $S_+(V)\otimes S_+(W)$ and $\Hom(S_+(V),S_+(W))$ have various natural interpretations allowing us to prove that, in general, these sets are not spectrahedra (and the latter are not even basic closed semialgebraic). In particular, this shows that the subcategory of spectrahedral cones, unlike that of polyhedral cones, is not closed under either tensor powers or Hom functors.}
\end{enumerate}
To conclude, we would like to propose the following open problem which stems naturally from the results in this article: Are the tensor powers, hom functors and symmetric powers of SDR cones also SDR cones?

\subsection*{Preliminaries} All vector spaces in this article are over the field of real numbers. If $V$ is a vector space then a cone $C$ in $V$ is a subset $C\subseteq V$ closed under nonnegative linear combinations of its elements. A face $F$ of a convex set $C$ is a cone $F\subseteq C$ such that if $c_1+c_2\in F$ for $c_1,c_2 \in C$ then $c_1,c_2\in F$. A face $F$ is exposed if there exists a linear functional $\phi\in V^*$, called a witness for $F$, such that $\phi(C)\geq 0$ and $F=C\cap {\rm ker}(\phi)$. A convex cone is pointed if the origin is an exposed face and any witness $\phi$ for  $\{0\}$ is called a grading for $C$. A cone is closed if it is a closed subset in the euclidean topology on $V$. By a convex body $P$ in $V$ we mean a full-dimensional compact convex set $P\subseteq V$. For preliminaries on convex sets including duality, polarity, and extreme points the reader should refer to~\cite{Barvinok}. For preliminaries on spectrahedra and SDR sets the reader should refer to Section~$\S$\ref{Sec: subCat} and to~\cite{Blekherman}. By a functor in a category we mean a functor from the category to itself. For preliminaries on Schur functors on vector spaces the reader should refer to~\cite[Section 8.1]{F}. 

\subsection*{Acknowledgements} I would like to thank Gregoriy Blekherman, Tristram Bogart, C\'esar Galindo, Mauricio Junca and Gregory G. Smith for many valuable conversations during the completion of this work.

\section{Categories of real convex sets}\label{Categories}
In this article we study the behavior of functors from multilinear algebra on convex sets in vector spaces. We work in the following two categories:

\begin{definition} Let $\AC$ denote the category of {\bf almost compact cones}. Its objects are pairs $(C,V)$ where $V$ is a finite-dimensional real vector space and $C$ is a pointed, closed and full-dimensional convex cone in $V$. We denote such a pair $(C,V)$ by its first component $C$ and define $LC:=V$. The morphisms between objects $C_1$ and $C_2$, denoted $\Hom_\AC(C_1,C_2)$ are the linear maps $f:LC_1\rightarrow LC_2$ satisfying $f(C_1)\subseteq C_2.$ 
\end{definition}
\begin{definition} Let $\MC$ denote the category of {\bf marked cones}. Its objects are triples $(C,g,s)$ where $C\in \AC$, $g:C\rightarrow \rr_+$ is a morphism and $s:\rr_+\rightarrow C$ is a section of $g$ with $s(1)\in {\rm int}(C)$.  The morphisms between objects $(C_1,g_1,s_1)$ and $(C_2,g_2,s_2)$ are the $f\in \Hom_\AC(C_1,C_2)$ such that $g_2\circ f=g_1$ and $f\circ s_1=s_2$. We will denote triples $(C,g,s)$ by their first component $C$ and denote $g$ and $s$ by $g_C$ and $s_C$ respectively.
\end{definition}
If $E$ is a real vector space and $P\subseteq E$ is a convex body then the cone $C_1={\rm Cone}\{(p,1): p\in P\}\subseteq E\times \rr$ is an almost compact cone and every such cone is the cone over some compact base. If moreover $P$ contains the origin in its interior then $C_1$ can be endowed with a grading $g:C_1\rightarrow \rr_+$ given by projection onto the last component and with a section $s(\alpha)=\alpha(0,1)$ so that $(C_1,g,s)$ is a marked cone. We can recover $P\subseteq E$ from $(C,g,s)$ by letting $E$ be the vector space obtained by making $s(1)$ the origin of the affine space $g^{-1}(1)\subseteq LC$ and letting $P:=g^{-1}(1)\cap C$. It is easy to see that via this construction

\begin{lemma} \label{Equiv} The category of marked cones is equivalent to the category of full-dimensional compact convex sets with $0$ in their interior and morphisms given by restrictions of linear maps.
\end{lemma}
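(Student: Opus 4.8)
The plan is to promote the two constructions sketched above to mutually quasi-inverse functors. Write $\mathcal{D}$ for the target category: objects are full-dimensional compact convex sets $P$ containing $0$ in their interior (inside their linear span $E$), and morphisms are restrictions of linear maps taking $P_1$ into $P_2$. I will build a functor $\Phi\colon\MC\to\mathcal{D}$ sending a marked cone to its base at level $1$, a functor $\Psi\colon\mathcal{D}\to\MC$ sending a body to the cone over it, and then exhibit natural isomorphisms $\Phi\Psi\cong\mathrm{id}_{\mathcal{D}}$ and $\Psi\Phi\cong\mathrm{id}_{\MC}$.

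On objects put $\Phi(C,g_C,s_C):=\bigl((g_C^{-1}(1)\cap C)-s_C(1),\ \ker g_C\bigr)$, the level-$1$ slice of $C$, translated so that $s_C(1)$ becomes the origin and regarded inside the codimension-one subspace $\ker g_C\subseteq LC$ (note $g_C(s_C(1))=1$, so $g_C\neq 0$). The identities $g_2\circ f=g_1$ and $f\circ s_1=s_2$ defining a morphism $f$ of marked cones force $f(\ker g_1)\subseteq\ker g_2$, $f\bigl(g_1^{-1}(1)\cap C_1\bigr)\subseteq g_2^{-1}(1)\cap C_2$ and $f(s_1(1))=s_2(1)$, so $f$ restricts to a linear map $\ker g_1\to\ker g_2$ carrying $\Phi(C_1)$ into $\Phi(C_2)$; this is $\Phi(f)$, and functoriality is automatic from functoriality of restriction. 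The only point with content is that $\Phi(C)$ really lies in $\mathcal{D}$: convexity is clear; since $s_C(1)\in\mathrm{int}(C)\cap g_C^{-1}(1)$ lies in the relative interior of the slice, $0$ is in the interior of $\Phi(C)$ inside $\ker g_C$ and $\Phi(C)$ is full-dimensional there; and compactness of the slice — the feature that makes $C$ ``almost'' compact — follows from $C$ being pointed and closed together with $g_C$ being a grading, i.e. $C\cap\ker g_C=\{0\}$: a normalized unbounded sequence in the slice would converge in the closed cone $C$ to a nonzero vector of $C\cap\ker g_C$.

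For $\Psi$ I follow the text: for $P\subseteq E$ put $LC:=E\times\rr$, $C:=\mathrm{Cone}\{(p,1):p\in P\}$, $g_C:=\mathrm{pr}_\rr$, $s_C(\alpha):=\alpha(0,1)$; compactness of $P$ makes $C$ closed, pointedness and full-dimensionality are immediate, and $0\in\mathrm{int}(P)$ gives $s_C(1)=(0,1)\in\mathrm{int}(C)$, so $(C,g_C,s_C)\in\MC$. A morphism $\phi$ of $\mathcal{D}$, being the restriction of a linear $\tilde\phi\colon E_1\to E_2$ with $\tilde\phi(P_1)\subseteq P_2$, goes to $\Psi(\phi):=\tilde\phi\times\mathrm{id}_\rr$, which is linear, carries $C_1$ into $C_2$, and commutes with the gradings and sections; functoriality is again immediate. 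Now the composites. Starting from $P\subseteq E$ one gets $\ker g_C=E\times\{0\}$ and $(g_C^{-1}(1)\cap C)-s_C(1)=\{(p,0):p\in P\}$, so $\Phi\Psi(P)=P$ under the evident identification $E\times\{0\}\cong E$, which is natural in $P$; hence $\Phi\Psi\cong\mathrm{id}_{\mathcal{D}}$. Starting from $(C,g_C,s_C)$ with slice $P\subseteq\ker g_C$, define $\Theta\colon\ker g_C\times\rr\to LC$ by $\Theta(u,t):=u+t\,s_C(1)$; applying $g_C$ shows $\Theta$ injective, and a dimension count makes it a linear isomorphism. It sends $\mathrm{Cone}\{(p,1):p\in P\}$ onto $C$ (again using that $g_C$ is a grading, so every nonzero $c\in C$ equals $g_C(c)$ times the $\Theta^{-1}$-image of a point of the slice), and it intertwines the gradings and the sections, so it is an isomorphism $\Psi\Phi(C)\xrightarrow{\ \sim\ }C$ in $\MC$. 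Naturality reduces to $f\circ\Theta_1=\Theta_2\circ\Psi\Phi(f)$, which follows at once from $f\circ s_1=s_2$. Therefore $\Psi\Phi\cong\mathrm{id}_{\MC}$ and $\Phi,\Psi$ realize the claimed equivalence.

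I do not expect a deep obstacle here — the argument is essentially bookkeeping around the two constructions already displayed in the text. The one step that must actually be \emph{checked} rather than merely unwound is the compactness of the base $\Phi(C)$, equivalently the statement that $g_C$ is a grading of $C$; this is where pointedness and closedness of $C$ enter, and it is also precisely what makes the isomorphism $\Theta$ surjective in the verification of $\Psi\Phi\cong\mathrm{id}$. Everything else — functoriality, the compatibilities $g_2f=g_1$ and $fs_1=s_2$, and naturality of the two isomorphisms — is formal.
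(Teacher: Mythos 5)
Your proof is correct and is essentially a careful unwinding of the two constructions the paper itself gives immediately before the lemma (cone over the base in one direction, level-one slice recentred at $s(1)$ in the other); the paper offers no further argument, dismissing the verification as ``easy to see.'' The one substantive point you isolate --- compactness of the slice via $C\cap\ker g_C=\{0\}$ --- is exactly right, though note that it requires reading the definition of a marked cone as demanding that $g$ be a genuine grading (as the paper's terminology and later usage clearly intend), not merely a morphism $C\to\rr_+$, since for a mere morphism the slice $g^{-1}(1)\cap C$ can be unbounded (e.g.\ $C=\rr_+^2$ with $g(x,y)=x$).
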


If $f\in \Hom_\MC(C_1,C_2)$ then $f(C_1)$ is a closed cone (since it is the cone over the compact convex set $f(g_1^{-1}(1)\cap C_1)$ which is contained in $g_2^{-1}(1)$ and thus does not contain the origin).  Closedness may fail for morphisms in $\AC$ as the following example shows:
\begin{example} Let $P$ be the convex set in the plane defined by $y\geq \pm x+x^2$. Let $C={\rm Cone}\{(p,1):p\in P\}\subseteq \rr^3$ and let $D\subseteq \rr^2$ be the cone generated by $(\pm 1,1)$. If $\pi: C\rightarrow D$ is the projection onto the first two components then the set $\pi(C)$ is the interior of $D$ together with the origin and in particular is not a closed cone. \end{example}

The following definitions induce a duality functor and define categorical products in $\AC$ and $\MC$.

\begin{definition} If $C\in \AC$ then its dual $C^*\subseteq LC^*\in \AC$. If $f\in \Hom_\AC(C_1,C_2)$ then the transpose $f^*:LC_2^*\rightarrow LC_1^*$ maps $C_2^*$ to $C_1^*$. If $(C,g,s)\in \MC$ then $(C^*,s^*,g^*)\in \MC$. If $C_1,C_2\in \AC$ then the cartesian product $C_1\times C_2\subseteq LC_1\times LC_2\in \AC$ and if $(C_i,g_i,s_i)\in \MC$ for $i\in\{1,2\}$ then letting $g(c_1,c_2):= \frac{g_1(c_1)+g_2(c_2)}{2}$ we see that $(C_1\times C_2, g,s_1\times s_2)\in \MC.$ 
\end{definition}

\begin{remark} Via the equivalence in Lemma~\ref{Equiv} the duality above recovers the concept of the polar $P^{\circ}$ of a convex set. The resulting product of two compact convex sets $P_1\subseteq E_1$ and $P_2\subseteq E_2$ is the subset $P\subseteq E_1\times E_2\times \rr$ given by $P={\rm Conv}((P_1,0,0), (0,P_2,1)).$
\end{remark}

\subsection{Subcategories of convex sets.}\label{Sec: subCat}
The following distinguished kinds of convex sets play an important role in this paper,

\begin{definition} For a real vector space $W$ the PSD cone $S_+(W)$ is the set of sums of squares in $\Sym^2(W)$. The pairs $(S_+(W),\Sym^2(W))$ are $\AC$ cones. A cone $(C,LC)\in \AC$ is a spectrahedral cone if there exists a real vector space $W$ and an injective linear map $\psi: LC\rightarrow \Sym^2(W^*)$ such that $C=\psi^{-1}(S_+(W^*))$. A cone $(D,LD)\in \AC$ is an SDR (semidefinitely representable) cone if there exist a spectrahedral cone $C$ and a surjective morphism $\pi: C\rightarrow D$.  A cone in $\MC$ is spectrahedral or SDR if all objects and morphisms from the previous paragraph are in $\MC$. 
\end{definition}
\begin{definition} Let $E\cong \rr^n$ be a real vector space. A spectrahedron in $E$ is a set of the form $\{x\in E: A+\sum_{i=1}^n x_iB_i\succeq 0\}$ for some symmetric matrices $A,B_1,\dots B_n$. An SDR set in $E$ is a set of the form $\{x\in E:\exists y\in E'\left(A+\sum_{i=1}^n x_i B_i+\sum_{j=1}^k y_j C_j\succeq 0\right)\}$ for some  real vector space $E'\cong \rr^k$ and some symmetric matrices $A,B_1,\dots B_n,C_1,\dots C_k$.
\end{definition}
Via Lemma~\ref{Equiv} compact spectrahedral (resp. SDR) sets in $E$ determine marked spectrahedral (resp. SDR) cones in $E\times \rr$. Conversely if $C$ is a spectrahedral (resp. SDR) cone in $\MC$ then the sets $C\cap g_C^{-1}(1)$ are spectrahedral (resp. SDR) sets. Spectrahedra and SDR sets play an important role in optimization because the problem of optimizing a linear functional over a spectrahedron (and thus over an SDR set) can be solved in polynomial time on the length of its description  (see~\cite{NN} for precise statements). 

\section{Functors on real convex sets.}\label{Sec: Functors}

In this section we define several linearization operations on convex sets. Our main contribution is to interpret them as solutions to universal problems and to study their facial structure.
\begin{definition} If $C_1,C_2\in \AC$ then $\Hom_\AC(C_1,C_2)$ is a cone in the real vector space $\Hom(LC_1,LC_2)$. Define
$C_1\otimes C_2:=\C\{ c_1\otimes c_2: c_i\in C_i\}\subseteq LC_1\otimes LC_2$ and for any integer $p>0$ define $\Sym^p(C_1)=\{v_1\dots v_p:v_i\in C_1\}\subseteq \Sym^p(LC_1)$. 
\end{definition}

\begin{theorem} If $C_1,C_2\in \AC$  (resp. $\in\MC$) and $p>0$ is an integer then the following statements hold:
\begin{enumerate}
\item{\label{Obj} The cones $\Hom_{\AC}(C_1,C_2)$, $C_1\otimes C_2$ and $\Sym^p(C_1)$ are in $\AC$ (resp. in $\MC$).}
\item{\label{functors} Defining the action on morphisms as in the category of vector spaces then $\Hom_{AC}(C_1,-)$, $\Hom_{\AC}(-, C_2)$, $C_1\otimes -$ and $\Sym^p(-)$ are functors in $\AC$ (resp. $\MC$).}
\end{enumerate}
\end{theorem}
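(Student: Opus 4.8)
The plan is to treat the three constructions ($\Hom_\AC(C_1,C_2)$, $C_1\otimes C_2$, $\Sym^p(C_1)$) in parallel, and to observe first that part (2) reduces to part (1) together with a well-definedness check: since the maps induced on morphisms are exactly the ones from the category of vector spaces, the functor axioms (identities to identities, compatibility with composition, contravariance of $\Hom_\AC(-,C_2)$) are inherited for free, and the only thing to prove is that those linear maps carry the relevant cone into the relevant cone---and, in the marked case, that they respect the chosen gradings and sections. So I would organize the proof as: (a) each of the three cones is in $\AC$; (b) in the marked case, exhibit a grading and a section; (c) the induced action on morphisms is well-defined and compatible with markings.

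For (a) the crux is closedness of $C_1\otimes C_2$ and $\Sym^p(C_1)$, since a conical hull of a set containing the origin need not be closed. Here I would use pointedness of the inputs. Choosing gradings $\phi_i$ of $C_i$, the bases $B_i:=C_i\cap\phi_i^{-1}(1)$ are compact; since every nonzero $c_i\in C_i$ is a positive multiple of an element of $B_i$, one has $\{c_1\otimes c_2:c_i\in C_i\}=\rr_+\cdot K$ with $K:=\{b_1\otimes b_2:b_i\in B_i\}$ compact, so $C_1\otimes C_2=\C(K)$. Applying the functional $\phi_1\otimes\phi_2$, which equals $1$ on $K$, shows $0\notin{\rm Conv}(K)$, and the cone over a compact set whose convex hull avoids the origin is closed. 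The same functional is nonnegative on $C_1\otimes C_2$ and vanishes there only at $0$, so $C_1\otimes C_2$ is pointed; and since $C_1,C_2$ span their ambient spaces the tensors $c_1\otimes c_2$ span $LC_1\otimes LC_2$, giving full-dimensionality. The argument for $\Sym^p(C_1)$ is identical, using $B:=C_1\cap\phi^{-1}(1)$ and the unique linear functional $\Phi$ on $\Sym^p(LC_1)$ with $\Phi(v_1\cdots v_p)=\phi(v_1)\cdots\phi(v_p)$. For $\Hom_\AC(C_1,C_2)$ closedness is immediate, as it is the intersection over $c\in C_1$ of the closed sets $\{f:f(c)\in C_2\}$; pointedness follows from the grading $f\mapsto\sum_i\psi(f(c_i))$, where $c_1,\dots,c_n$ is a basis of $LC_1$ contained in $C_1$ (take any basis inside ${\rm int}(C_1)$) and $\psi$ is a grading of $C_2$; and full-dimensionality holds because the rank-one maps $x\mapsto\ell(x)c$ with $\ell\in C_1^*$ and $c\in C_2$ lie in $\Hom_\AC(C_1,C_2)$ and already span $\Hom(LC_1,LC_2)=LC_1^*\otimes LC_2$, using that $C_1^*$ spans $LC_1^*$ (as $C_1$ is pointed) and $C_2$ spans $LC_2$.

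For (b) I would take as gradings $g_1\otimes g_2$ on $C_1\otimes C_2$, the functional $\Phi$ built from $g_1$ on $\Sym^p(C_1)$, and $f\mapsto g_2(f(s_1(1)))$ on $\Hom_\AC(C_1,C_2)$; and as sections $\alpha\mapsto\alpha\,s_1(1)\otimes s_2(1)$, $\alpha\mapsto\alpha\,s_1(1)^p$, and $\alpha\mapsto\alpha\,(s_2\circ g_1)$ respectively. The identities $g\circ s={\rm id}$ are one-line computations from $g_i\circ s_i={\rm id}$. The remaining point is that $s(1)$ is an interior point; for $C_1\otimes C_2$ and $\Sym^p(C_1)$ this reduces to showing that if $c_i\in{\rm int}(C_i)$ then $c_1\otimes c_2\in{\rm int}(C_1\otimes C_2)$ (resp.\ $v^p\in{\rm int}(\Sym^p C_1)$ for $v\in{\rm int}(C_1)$), which follows from a perturbation argument: from $(c_1\pm\delta e_i)\otimes(c_2\pm\delta f_j)\in C_1\otimes C_2$ one extracts $c_1\otimes c_2\pm\delta^2\,e_i\otimes f_j\in C_1\otimes C_2$ for a basis $\{e_i\otimes f_j\}$, and convexity then fills out a full ball about $c_1\otimes c_2$. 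For $\Hom_\AC(C_1,C_2)$ the base $g_1^{-1}(1)\cap C_1$ is compact and $s(1)=s_2\circ g_1$ sends it to the single point $s_2(1)\in{\rm int}(C_2)$, so by uniform continuity every linear map close enough to $s(1)$ still carries this base into $C_2$; as the base generates $C_1$, such maps lie in $\Hom_\AC(C_1,C_2)$, whence $s(1)$ is interior.

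Finally, for (c): if $f$ is a morphism with $f(C)\subseteq C'$ then $\Sym^p(f)$ sends a generator $v_1\cdots v_p$ of $\Sym^p(C)$ to $f(v_1)\cdots f(v_p)\in\Sym^p(C')$, hence $\Sym^p(f)(\Sym^p C)\subseteq\Sym^p C'$; likewise ${\rm id}_{LC_1}\otimes f$ sends generators of $C_1\otimes C_2$ into $C_1\otimes C_2'$, and $\psi\mapsto f\circ\psi$ (resp.\ $\psi\mapsto\psi\circ f$) sends $\Hom_\AC(C_1,C_2)$ into $\Hom_\AC(C_1,C_2')$ (resp.\ $\Hom_\AC(C_1',C_2)$ into $\Hom_\AC(C_1,C_2)$) because $f(\psi(\cdot))$ and $\psi(f(\cdot))$ remain inside $C_2$. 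In the marked case one checks in addition that these induced maps intertwine the gradings and sections chosen in (b); this is again a one-line computation from $g'\circ f=g$ and $f\circ s=s'$, e.g.\ $g_{\Sym^p C'}(\Sym^p(f)(v_1\cdots v_p))=\prod_i g'(f(v_i))=\prod_i g(v_i)=g_{\Sym^p C}(v_1\cdots v_p)$, and similarly on sections. The only step I expect to require genuine care is the closedness of $C_1\otimes C_2$ and $\Sym^p(C_1)$; once pointedness of the inputs is used to pass to compact bases, everything else is bookkeeping.
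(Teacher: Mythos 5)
Your proposal is correct and follows essentially the same route as the paper: closedness via compact bases whose conical hull avoids the origin, the gradings $g_1\otimes g_2$, $\Sym^p(g_1)$ and $f\mapsto g_2(f(s_1(1)))$ with the section $s_2\circ g_1$, and reduction of functoriality on morphisms to a well-definedness check on generators plus compatibility with the markings. The differences are cosmetic --- you treat $\AC$ directly before adding markings where the paper reduces $\AC$ to $\MC$ at the outset, and you fill in some details the paper merely asserts (e.g.\ the interiority of $s_1(1)\otimes s_2(1)$ via the sign-averaging perturbation, and full-dimensionality of $\Hom_\AC(C_1,C_2)$ via rank-one maps).
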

\begin{proof} (\ref{Obj}.) Since every cone in $\AC$ can be endowed (non-canonically) with a grading and a section it is sufficient to show that the above operations applied to cones in $\MC$ lead to cones in $\MC$. Thus assume $(C_i,g_i,s_i)\in \MC$ for $i=1,2$. Since $C_2$ is closed, $\Hom_\AC(C_1,C_2)=\bigcap_{c\in C_1} \bigcap_{\lambda\in C_2^*} \{f: \lambda(f(c))\geq 0\}$ and thus it is an intersection of closed sets and hence closed. Since $s_2(1)$ is in the interior of $C_2$, the cone $\Hom_\AC(C_1,C_2)$ contains the element  $s_2\circ g_1: C_1\rightarrow C_2$ as well as any homomorphism $f+s_2\circ g_1$ for $f$ in a sufficiently small ball around the origin in $\Hom(LC_1,LC_2)$ and thus  $\Hom_\AC(C_1,C_2)$ is full-dimensional. Finally the function $h$ sending $f\in \Hom(LC_1,LC_2)$ to $g_2(f(s_1(1)))$ defines a grading since $h(f)=0$ implies that $f(s_1(1))=0$ and thus $f$ maps an interior point of $C_1$ to $0$ forcing $f$ to be the $0$ map. Thus $\left(\Hom_\AC(C_1,C_2), h, s_2\circ g_1\right)\in \MC$. For the tensor product note that 
$C_1\cap g_1^{-1}(1)\times C_2\cap g_2^{-1}(1)\rightarrow C_1\cap g_1^{-1}(1)\otimes C_2\cap g_2^{-1}(1)$ is a continuous surjection and thus the right hand side is compact and does not contain the origin. It follows that the cone over it, which is $C_1\otimes C_2$ is closed. It is full-dimensional since the pairwise tensor products of bases for $LC_1$ and $LC_2$ contained in $C_1$ and $C_2$ resp. are a basis for $LC_1\otimes LC_2$ contained in $C_1\otimes C_2$. Similarly $s_1(1)\otimes s_2(1)$ is an interior point of $C_1\otimes C_2$. Now let $\beta\in C_1\otimes C_2$ so $\beta = \sum c_1^j\otimes c_2^j$ with $c_i^j\in C_1$ and note that $(g_1\otimes g_2)(\beta)=\sum g_1(c_1^j)\otimes g_2(c_2^j)=0$  iff for every summand either $c_1^j$ or $c_2^j$ are zero and thus iff $\beta=0$. As a result the function $g_1\otimes g_2$ is a grading of the tensor product. It follows that $\left( C_1\otimes C_2, g_1\otimes g_2, s_1\otimes s_2\right)\in \MC$. Here we implicitly used the fact that the multiplication map gives a canonical isomorphism between $\rr\otimes \rr$ and $\rr$. For the symmetric powers define the function $\Sym^p(g_1)(v_1\dots v_p)=g_1(v_1)\dots g_1(v_p)\in \Sym^p(\rr)\cong \rr$ and extend linearly. It is immediate that $\Sym^p(g_1)$ is a grading on $\Sym^p(C_1)$. The multiplication map $\mu: C_1^{\otimes p}\rightarrow \Sym^p(C_1)$ mapping $v_1\otimes\dots \otimes v_p\rightarrow v_1\dots v_p$ is a surjective linear map whose image is the cone over  the compact set $\mu\left((C_1\cap g_1^{-1}(1))^{\otimes p}\right)$ which is contained in $\Sym^p(g_1)^{-1}(1)$ and thus does not contain the origin. It follows that $\Sym^p(C_1)$ is a closed and full-dimensional cone. It follows that $\left(\Sym^p(C_1), \Sym^p(g_1), \Sym^p(s_1)\right)\in \MC$. Here we have implicitly used the fact that $\Sym^p(\rr)$ is canonically isomorphic to $\rr$ via the multiplication. (\ref{functors}.) Define the operations on objects as in part $(\ref{Obj}.).$ Since $\Hom_\AC(C_1,C_2)\subseteq \Hom(LC_1,LC_2)$ then we can define the action of $\Hom_{AC}(C_1,-)$, $\Hom_{\AC}(-, C_2)$, $C_1\otimes -$ and $\Sym^p(-)$ on morphisms as that of the corresponding functors on vector spaces and this definition will respect compositions. It follows that the above operations are functors in $\AC$. For functoriality in $\MC$ we need to verify that the images of morphisms in $\MC$ are also in $\MC$ (i.e. commute with the grading and the section of the corresponding objects). We verify the case of $\Hom_\AC(D,-)$ and leave the remaining similar verifications to the reader. Thus assume $f\in \Hom_\MC(A,C)$ and we wish to verify that $\hat{f}:\Hom_\AC(D,A)\rightarrow \Hom_\AC(D,C)$ is a morphism of marked cones. This amounts to showing that the equalities $g_C\circ f\circ h \circ s_D(1)= g_A\circ h\circ s_D(1)$ and $f\circ s_A\circ g_D=s_C\circ g_D$ hold for every $h\in \Hom_\AC(D,A)$. This is an immediate consequence of the fact that $f\in \Hom_\MC(A,C)$. 
\end{proof}

\begin{remark} Via the equivalence in Lemma~\ref{Equiv} the above functors define operations on convex bodies containing the origin in their interior. Concretely, for convex bodies $P_i\subseteq E_i$ we have $P_1\otimes P_2:={\rm Conv}\{(p_1,p_2,p_1\otimes p_2):p_i\in P_i\}\subseteq E_1\times E_2\times (E_1\otimes E_2)$. For an integer $n>0$ and $0\leq j\leq n$ let $e_j(x_1,\dots, x_n)$ be the $j$-th elementary symmetric polynomial in $n$ variables. We have  
$\Sym^n(P_1):={\rm Conv}\{( e_1(p_1,\dots,p_n),\dots, e_n(p_1,\dots,p_n)): p_i\in P_1\}\subseteq \prod_{j=1}^n \Sym^j(E_1)$.
\end{remark}

Next we show that tensor powers and symmetric products are solutions to universal linearization problems.

\begin{definition} Let $n>0$ be an integer and $C_1,\dots, C_n,D\in \AC$. A function $T:\prod C_i\rightarrow D$ is multilinear if it is the restriction of a multilinear function $T:\prod LC_i\rightarrow LD$ satisfying $T(\prod C_i)\subseteq D$. Note that $T$ is uniquely determined by its restriction to $\prod C_i$. A multilinear $T$ is symmetric if for every permutation $\sigma\in S_n$ and every $v_i\in C_i$ we have $T(v_{\sigma(1)},\dots, v_{\sigma(n)})=T(v_1,\dots, v_n)$. 
\end{definition}

\begin{theorem} \label{Lemma: Univ} Let $n>0$ be an integer. 
\begin{enumerate}
\item{The following universal linearization properties hold,
\begin{enumerate}
\item{\label{U1} Assume $C_1,\dots, C_n\in \AC$ and let $u:\prod C_i\rightarrow \otimes C_i$ be the map $u(v_1,\dots, v_n)=v_1\otimes\dots\otimes v_n$. For every $D\in \AC$ and every multilinear map $T:\prod C_i\rightarrow D$ there is a unique $t\in \Hom_\AC(\bigotimes C_i,D)$ such that $T=t\circ u$.}
\item{\label{U2} Let $q: C_1^{n}\rightarrow \Sym^n(C_1)$ be the map $q(v_1,\dots, v_p)=v_1\dots v_n$. For every $D\in \AC$ and every multilinear symmetric map $T:C_1^n\rightarrow D$ there is a unique $t\in \Hom_\AC(\Sym^n(C_1),D)$ such that $T=t\circ q$.}
\end{enumerate}
}
\item{\label{Adjoint} For every $A,B,C\in \AC$, tensor products and homs satisfy the following adjunction formula $\Hom_\AC(A\otimes B, C)\cong \Hom_\AC(A,\Hom_\AC(B,C)).$
} 
\end{enumerate}
\end{theorem}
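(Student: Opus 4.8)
The plan is to reduce all three statements to their classical counterparts in the category of finite-dimensional vector spaces, and then to check that the resulting linear maps carry cones into cones using the fact — built into the definitions, and recorded in the descriptions of these cones above — that every element of $\bigotimes C_i$, of $\Sym^n(C_1)$, and of a $\Hom$-cone is a finite nonnegative combination of the evident generators.

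For (\ref{U1}) I would begin with a multilinear $T\colon\prod C_i\to D$; by definition it is the restriction of a multilinear map $T\colon\prod LC_i\to LD$, and this extension is unique because each $C_i$, being full-dimensional, spans $LC_i$. The universal property of the tensor product of vector spaces then yields a unique linear $t\colon\bigotimes LC_i\to LD$ with $t(v_1\otimes\cdots\otimes v_n)=T(v_1,\dots,v_n)$ for all $v_i\in LC_i$. To see that $t\in\Hom_\AC(\bigotimes C_i,D)$, note that $t(c_1\otimes\cdots\otimes c_n)=T(c_1,\dots,c_n)\in D$ whenever $c_i\in C_i$, and since $\bigotimes C_i=\C\{c_1\otimes\cdots\otimes c_n: c_i\in C_i\}$ while $D$ is a convex cone, $t$ sends this conical hull into $D$. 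Uniqueness of $t$ within $\Hom_\AC(\bigotimes C_i,D)$ is inherited from the vector-space level, since the pure tensors $c_1\otimes\cdots\otimes c_n$ with $c_i\in C_i$ span $\bigotimes LC_i$. Part (\ref{U2}) runs identically once one observes that the vector-space extension of a symmetric multilinear $T\colon C_1^n\to D$ is again symmetric — each difference $T(v_{\sigma(1)},\dots,v_{\sigma(n)})-T(v_1,\dots,v_n)$ is a multilinear function on $LC_1^n$ vanishing on $C_1^n$, hence identically zero — so the universal property of $\Sym^n$ of a vector space supplies the unique linear $t$, and then $\Sym^n(C_1)=\C\{v_1\cdots v_n:v_i\in C_1\}$ together with cone-closure of $D$ gives $t(\Sym^n(C_1))\subseteq D$.

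For the adjunction (\ref{Adjoint}) I would use the classical linear isomorphism $\Theta\colon\Hom(LA\otimes LB,LC)\to\Hom(LA,\Hom(LB,LC))$ determined by $\Theta(\phi)(a)(b)=\phi(a\otimes b)$, and show it restricts to a bijection between the two cones. If $\phi(A\otimes B)\subseteq C$ then for $a\in A$ and $b\in B$ we have $\Theta(\phi)(a)(b)=\phi(a\otimes b)\in C$, so $\Theta(\phi)(a)\in\Hom_\AC(B,C)$ for every $a\in A$, i.e.\ $\Theta(\phi)\in\Hom_\AC(A,\Hom_\AC(B,C))$. Conversely, if $\Psi\in\Hom_\AC(A,\Hom_\AC(B,C))$ and $\psi:=\Theta^{-1}(\Psi)$, then $\psi(a\otimes b)=\Psi(a)(b)\in C$ for all $a\in A$ and $b\in B$; since $A\otimes B=\C\{a\otimes b: a\in A,\ b\in B\}$ and $C$ is a cone, $\psi(A\otimes B)\subseteq C$, so $\psi\in\Hom_\AC(A\otimes B,C)$. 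Hence $\Theta$ and $\Theta^{-1}$ both respect the cones, giving the claimed isomorphism in $\AC$, which is natural in $A,B,C$ by naturality of $\Theta$ on the vector-space level. Equivalently, this adjunction follows formally from (\ref{U1}): via that universal property both sides are canonically identified with the cone of multilinear (bilinear) maps $A\times B\to C$.

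I do not expect a genuine obstacle. The two points that merit care are: (i) the vector-space-level maps are uniquely determined, which holds because the objects of $\AC$ are full-dimensional and is already encoded in the definition of a multilinear map of cones; and (ii) one never has to pass to closures in these arguments — $\bigotimes C_i$ and $\Sym^n(C_1)$ are the honest conical hulls of their generators, so every element is a finite nonnegative combination of them, and the step from ``$t$(generator)$\,\in D$'' to ``$t$(cone)$\,\subseteq D$'' is immediate from $D$ being a convex cone.
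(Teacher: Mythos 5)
Your proposal is correct and follows essentially the same route as the paper: reduce each statement to the corresponding universal property in the category of vector spaces, then observe that since $\bigotimes C_i$ and $\Sym^n(C_1)$ are the conical hulls of the evident generators and $D$ is a convex cone, the induced linear map carries the cone into $D$. Your write-up supplies some details the paper leaves implicit (uniqueness of the multilinear extension from full-dimensionality, symmetry of the extension in part (\ref{U2}), and the explicit check that the classical adjunction isomorphism restricts to the cones), but the underlying argument is the same.
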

\begin{proof} (\ref{U1}.) By the universal property of tensor products in the category of vector spaces every such $T$ determines a unique linear map $t:\bigotimes LC_i\rightarrow LD$ with $T=t\circ h$ thus every generator $v_1\otimes \dots \otimes v_n$ of the cone $\bigotimes C_i$, and hence the cone itself is mapped via $t$ to $D$. Conversely a morphism $t:\bigotimes C_i\rightarrow D$ is the restriction of a unique linear map $t:\bigotimes LC_i\rightarrow LW$ and thus defines a multilinear map $T:\prod LC_i\rightarrow LD$ via $T=t\circ h$. Since $t(\bigotimes C_i)\subseteq D$ it follows that $T(\prod C_i)\subseteq D$ as claimed. (\ref{U2}.) Follows by a similar argument from the universal property of symmetric powers in the category of vector spaces. (\ref{Adjoint}.) Follows from part (~\ref{U1}.) since $\Hom_\AC(A,\Hom_\AC(B,C))$ is the set of bilinear maps from $A\times B$ to $C$. \end{proof}

The following Theorem describes some basic properties of the facial structure of the cones obtained by applying linearization functors,

\begin{theorem} \label{Faces} Let $A,B\in \AC$ and $n>0$ an integer. The following statements hold:
\begin{enumerate}
\item{\label{tensor} The extreme rays of $A\otimes B$ are precisely the tensor products of extreme rays of $A$ and $B$. If $F_A$ and $F_B$ are faces (resp.exposed faces) of $A$ and $B$ then $F_A\otimes F_B$ is a face (resp. an exposed face) of $A\otimes B$.}
\item{\label{sym} The extreme rays of $\Sym^n(A)$ are products of extreme rays of $A$. If $F_1,\dots, F_n$ are exposed face of $A$ then $F_1\cdots F_n:={\rm Cone}(p_1\cdots p_n:p_i\in F_i)$ is an exposed face of $\Sym^n(A)$.}
\item{\label{hom} The maximal exposed faces of $\Hom_\AC(A,B)$ are in canonical correspondence with the exposed extreme rays of $A\otimes B^*$.}
\end{enumerate}
\end{theorem}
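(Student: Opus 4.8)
The plan is to reduce all three parts to two elementary facts about these categories: (i) the preimage of a face (resp.\ exposed face) under a morphism is again a face (resp.\ exposed face) --- if $F=C_2\cap\ker\phi$ with $\phi\in C_2^*$ then $f^{-1}(F)=C_1\cap\ker(\phi\circ f)$ and $\phi\circ f\in C_1^*$ --- and (ii) the intersection of two faces (resp.\ exposed faces) is again a face (resp.\ an exposed face, witnessed by the sum of the two witnesses). As in the first theorem of this section every object of $\AC$ admits a grading and a section, so I would assume throughout that all cones lie in $\MC$ and use freely the auxiliary morphisms obtained by tensoring and symmetrizing gradings against other functionals.

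\emph{Part (\ref{tensor}).} The morphism $\mathrm{id}_{LA}\otimes g_B\colon A\otimes B\to A$, $a\otimes b\mapsto g_B(b)\,a$, satisfies $F_A\otimes B=(\mathrm{id}\otimes g_B)^{-1}(F_A)\cap(A\otimes B)$: the inclusion ``$\subseteq$'' is clear, and for ``$\supseteq$'' one writes an element of $A\otimes B$ as $\sum a_j\otimes b_j$ with $b_j\ne 0$ and notes that $\sum g_B(b_j)a_j\in F_A$ forces each $a_j\in F_A$ since $F_A$ is a face. Thus $F_A\otimes B$ is a face of $A\otimes B$ by (i), exposed by $\phi_A\otimes g_B$ when $F_A=A\cap\ker\phi_A$; symmetrically $A\otimes F_B$ is an (exposed) face via $g_A\otimes\mathrm{id}_{LB}$. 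Then $F_A\otimes F_B=(F_A\otimes B)\cap(A\otimes F_B)$ (``$\supseteq$'' clear; ``$\subseteq$'' by applying $g_A\otimes\mathrm{id}_{LB}$ to a representation $\sum f_j\otimes b_j$ with $f_j\in F_A$, which forces each $b_j\in F_B$), so it is an (exposed) face by (ii). For the extreme rays: since $A\otimes B=\C\{a\otimes b\}$, a generator of an extreme ray is a positive multiple of a single $a\otimes b$ with $a\in A\setminus\{0\}$, $b\in B\setminus\{0\}$; decomposing $a=a'+a''$ in $A$ and using injectivity of $x\mapsto x\otimes b$ shows $\C(a)$ is an extreme ray of $A$, and likewise for $b$; the converse is the face statement for one-dimensional $F_A,F_B$.

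\emph{Part (\ref{sym}).} The forward half of the extreme-ray statement repeats the argument of part (\ref{tensor}), now using that $\Sym^{\bullet}(LA)$ is an integral domain, so multiplication by any fixed nonzero element is injective. For the statement on $F_1\cdots F_n$ (which also yields the remaining half of the extreme-ray claim) I would use the multiplication surjection $\mu\colon A^{\otimes n}\to\Sym^n(A)$, or equivalently identify $\Sym^n(A)$ with $A^{\otimes n}\cap W$, where $W\subseteq(LA)^{\otimes n}$ is the subspace of symmetric tensors and $F_1\cdots F_n$ is the image under symmetrization of the exposed face $F_1\otimes\cdots\otimes F_n$ of $A^{\otimes n}$ (part (\ref{tensor}) plus induction). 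Since intersecting an exposed face with a linear subspace yields an exposed face of the intersection, it suffices to produce a witness $\Psi\in\Sym^n(LA)^*$ with $\Psi\ge 0$ on $\Sym^n(A)$ and $\ker\Psi\cap\Sym^n(A)=F_1\cdots F_n$; after rescaling the $\phi_i$ so that $\phi_i\le g_A$ on $A$, one builds $\Psi$ from the $\phi_i$, the functionals $g_A-\phi_i$, and functionals exposing the intersections $\bigcap_{i\in S}F_i$. I expect this to be the main obstacle of the whole theorem: the naive symmetrization of the tensor witness $\sum_i g_A^{\otimes(i-1)}\otimes\phi_i\otimes g_A^{\otimes(n-i)}$ already fails to vanish on $F_1\cdots F_n$ once the $F_i$ differ, so one must arrange that $\Psi$ vanishes on every product $f_1\cdots f_n$ ($f_i\in F_i$) and --- the delicate point, which is what makes the zero locus equal to $F_1\cdots F_n$ rather than a larger face --- is \emph{strictly} positive on every product $v_1\cdots v_n$ ($v_i\in A$) not lying in $F_1\cdots F_n$; this is where exposedness of the $F_i$, not merely their being faces, enters.

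\emph{Part (\ref{hom}).} First identify $\Hom_\AC(A,B)$ with the dual cone $(A\otimes B^*)^*$: under the natural pairing $\langle f,a\otimes\psi\rangle=\psi(f(a))$ between $\Hom(LA,LB)$ and $LA\otimes LB^*$, the condition $f(A)\subseteq B$ reads $\langle f,a\otimes\psi\rangle\ge 0$ for all $a\in A$, $\psi\in B^*$, i.e.\ $\langle f,\xi\rangle\ge 0$ for all $\xi\in A\otimes B^*$. Next invoke the facial duality of a closed convex cone $C$ (see~\cite{Barvinok}): $F\mapsto F^{\perp}\cap C^*$ is an inclusion-reversing bijection between the exposed faces of $C$ and those of $C^*$, with inverse $G\mapsto G^{\perp}\cap C$. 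Applying this with $C=A\otimes B^*$ (which is closed, being in $\AC$ by the first theorem of this section), the maximal proper exposed faces of $C^*=\Hom_\AC(A,B)$ correspond to the minimal nonzero exposed faces of $A\otimes B^*$. By part (\ref{tensor}) the extreme rays of $A\otimes B^*$ are the tensor products $\C(a)\otimes\C(\psi)$ of extreme rays, and such a product is an exposed face precisely when both factors are exposed (the ``only if'' is seen by exposing $\C(a)$ with $\psi\circ\Phi$ when $\Phi$ witnesses $\C(a)\otimes\C(\psi)$, noting $\psi\circ\Phi\in A^*$ since $\Phi(A)\subseteq B$); this identifies the minimal nonzero exposed faces of $A\otimes B^*$ with its exposed extreme rays and yields the stated correspondence, which sends the exposed extreme ray $\C(a\otimes\psi)$ to the exposed face $\{f\in\Hom_\AC(A,B):\psi(f(a))=0\}$.
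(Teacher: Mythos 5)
Your parts (\ref{tensor}) and (\ref{hom}) are correct and are essentially the paper's own arguments. Part (\ref{tensor}) repackages the paper's direct computation on the compact bases through two valid general facts (preimages and intersections of faces are faces) and lands on the same exposing functional $\phi_A\otimes g_B+g_A\otimes\phi_B$; part (\ref{hom}) is the same two-step reduction the paper uses, namely the adjunction $\Hom_\AC(A,B)=(A\otimes B^*)^*$ followed by the conjugate-face correspondence between maximal proper exposed faces of a cone and exposed extreme rays of its dual. Your extra remarks (the cancellation argument showing the factors of an extreme pure tensor are themselves extreme, and the criterion for $\C(a)\otimes\C(\psi)$ to be exposed) are correct but not needed for the statement.

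The genuine gap is in part (\ref{sym}), where you have located exactly the right difficulty without resolving it. Your observation that the symmetrization of $\sum_i g_A^{\otimes(i-1)}\otimes\phi_i\otimes g_A^{\otimes(n-i)}$ fails to vanish on $F_1\cdots F_n$ when the $F_i$ differ is correct: for $A=\rr_+^2$, $F_1=\C(e_1)$, $F_2=\C(e_2)$, $\phi_1=e_2^*$, $\phi_2=e_1^*$ the symmetrization is $g_A\cdot g_A$ up to scale, which is strictly positive at $e_1e_2\in F_1F_2$. (This is, up to ordering of arguments, precisely the functional $\psi(v_1,\dots,v_n)=\sum_i g_A(v_1)\cdots\phi_i(v_i)\cdots g_A(v_n)$ that the paper offers as its witness; that map is not symmetric when the $\phi_i$ differ, and its symmetrization exposes $\Sym^n(F_1\cap\dots\cap F_n)$ rather than $F_1\cdots F_n$, so your diagnosis in fact points at a defect in the published argument.) But identifying the obstacle is not overcoming it: a proof of exposedness must exhibit the witness, and ``one builds $\Psi$ from the $\phi_i$, the functionals $g_A-\phi_i$, and functionals exposing the intersections $\bigcap_{i\in S}F_i$'' is a wish list, not a construction --- you never write $\Psi$ down nor verify that its zero set on products is exactly $F_1\cdots F_n$. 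The difficulty is not cosmetic: for $A=\rr_+^4$, $F_1=\C(e_1,e_2)$, $F_2=\C(e_2,e_3)$ the next natural candidate $\phi_1^2+\phi_2^2$ (which does handle the rank-one examples) has $e_2e_4$ in its zero set although $e_2e_4\notin F_1F_2$, so terms genuinely depending on the joins of the $F_i$ are required.

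A secondary problem in the same part: the identification $\Sym^n(A)=A^{\otimes n}\cap W$ is asserted, not proved (only the inclusion $\subseteq$ is clear), and in any case $(F_1\otimes\cdots\otimes F_n)\cap W$ is \emph{not} the symmetrization of $F_1\otimes\cdots\otimes F_n$: a symmetric element of $F_1\otimes F_2$ essentially forces both tensor factors into $F_1\cap F_2$, so this slice is far smaller than $F_1\cdots F_n$. Hence the reduction of part (\ref{sym}) to part (\ref{tensor}) via the symmetric subspace does not go through as stated, and part (\ref{sym}) of your proposal remains an accurate description of the problem rather than a proof.
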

\begin{proof} (\ref{tensor}.) By the Krein-Milman Theorem the cones $A$ and $B$ are generated by their extreme rays. As a result $A\otimes B$ is generated by the tensor powers of extreme rays and thus every extreme ray of $A\otimes B$ is of this form. Let $P_A:=A\cap g_A^{-1}(1)$ and $P_B:=B\cap g_A^{-1}(1)$ and recall that the convex set $P_A\otimes P_B={\rm Conv}\{(p_A,p_B,p_A\otimes p_B,1):p_A\in P_A,p_B\in P_B\}$ is precisely $A\otimes B\cap (g_a\otimes g_B)^{-1}(1)$ and in particular there is a correspondence between the faces of $P_A\otimes P_B$ and the nonempty faces of $A\otimes B$. Now suppose $F_A$ and $F_B$ are faces of $A$ and $B$ inducing faces $G_A$ and $G_B$ of $P_A$ and $P_B$. We want to show that the convex set $G_A\otimes G_B:={\rm Conv}\{(p_A,p_B,p_A\otimes p_B,1):p_A\in G_A,p_B\in G_B\}$ is a face of $P_A\otimes P_B$. If $a_i\in P_A$ and $b_i\in P_B$ and $\sum \lambda_i(a_i,b_i,a_i\otimes b_i,1)\in G_A\otimes G_B$ then $\sum \lambda_i a_i\in G_A$ and $\sum \lambda_ib_i\in G_B$ and thus, since $G_A$ and $G_B$ are faces of $P_A$ and $P_B$, we have that $a_i\in G_A$ and $b_i\in G_B$. It follows that $(a_i,b_i,a_i\otimes b_i,1)\in G_A\otimes G_B$. In particular the tensor products of extreme rays of $A$ and $B$ are extreme rays of $A\otimes B$. If $\phi_A$ and $\phi_B$ are supporting linear functions for $F_A$ and $F_B$ then $\phi_A\otimes g_B + g_A\otimes \phi_B$  is a linear functional supporting the face $F_A\otimes F_B$. (\ref{sym}.) By the Krein-Milman Theorem the cone $A$ is generated by its extreme rays and thus $\Sym^n(A)$ is generated by the products of extreme rays of $A$ and in particular every extreme ray of $\Sym^n(A)$ must be a product of extreme rays of $A$. If $\phi_i$ is a linear functional supporting the face $F_i$ then the linear function associated to the symmetric multilinear map $\psi:LV^n\rightarrow \rr$  given by $\psi(v_1,\dots, v_n):=\sum_{i=1}^n g_A(v_1)\dots g_A(v_{i-1})\phi_i(v_i)g_A(v_{i+1})\dots g_A(v_n)$ supports $F_1\cdots F_n$. (\ref{hom}.) By Lemma~\ref{Lemma: Univ} part (\ref{Adjoint}.) we have $\Hom_\AC(A,B)=\Hom_\AC(A,\Hom_\AC(B^*,\rr))=\Hom_\AC(A\otimes B^*,\rr)$. The result now follows from the well known correspondence between the maximal proper exposed faces of a convex cone and the exposed rays of its dual.
\end{proof}

\begin{example} Let $P=[-1,1]\subseteq \mathbb{R}$. The set $P\otimes P$ is the simplex shown in Figure~\ref{Fig1}. The only proper nonempty faces of $P\otimes P$ which are tensor powers of faces of $P$ are the vertices and the edges drawn in bold in the figure. If $n>0$ is an integer then $\Sym^n(P)$ is the simplex in $\rr^n$ whose vertices are obtained as the coefficients of the positive powers of $x$ in the polynomial $(x+1)^m(1-x)^{n-m}$ for $0\leq m\leq n$. In particular $\Sym^2(P)$ is the triangle shown in Figure~\ref{Fig1}. The only proper nonempty faces of $\Sym^2(P)$ which can be obtained as products of faces of $P$ are the vertices and bold edges. More generally there are $\binom{n+2}{2}$ product faces among the $2^n-1$ nonempty faces of $\Sym^n(P).$
\begin{figure}[h]
\tdplotsetmaincoords{30}{10}
\begin{tikzpicture}[tdplot_main_coords,scale=1.3]
\coordinate (V1) at (1,1,1);
\coordinate (V2) at (-1,1,-1);
\coordinate (V3) at (-1,-1,1);
\coordinate (V4) at (1,-1,-1);
\draw[fill=cof,opacity=0.6] (V1) -- (V3) -- (V4);
\draw[fill=pur,opacity=0.6] (V3) -- (V1) -- (V2);
\draw[fill=greeo,opacity=0.6] (V4) -- (V1) -- (V2);
\draw[fill=greet,opacity=0.6] (V2) -- (V3) -- (V4);
\draw [line width=1.7] (V1)--(V2)--(V3)--(V4)--cycle;
\draw[thick,dashed,->] (-2,0,0) -- (2,0,0) node[anchor=north east]{$x$};
\draw[thick,dashed, ->] (0,-2,0) -- (0,2,0) node[anchor=north west]{$y$};
\draw[thick,dashed, ->] (0,0,-2) -- (0,0,2.5) node[anchor=south]{$z$};
\node [above] at (1,1,1) {$(1,1,1)$};
\node [below] at (1,-1,-1) {$(1,-1,-1)$};
\node [left] at (-1,-1,1) {$(-1,-1,1)$};
\node [left] at (-1,1,-1) {$(-1,1,-1)$};
\end{tikzpicture}
\begin{tikzpicture}[scale=1.3]
\coordinate (V1) at (-1,0);
\coordinate (V2) at (1,2);
\coordinate (V3) at (1,-2);
\draw[line width=1.7,fill=cof] (V1) -- (V2);
\draw[line width=1.7,fill=cof] (V1) -- (V3);
\draw[fill=greeo,opacity=0.6] (V3) -- (V1) -- (V2)--cycle;
\node at (-1.5,0.2) {$(-1,0)$};
\node at (1,2.2) {$(1,2)$};
\node at (1,-1.8) {$(1,-2)$};
\draw[thick,dashed,->] (-2,0) -- (2,0) node[anchor=north east]{$x$};
\draw[thick,dashed, ->] (0,-2) -- (0,2) node[anchor=north west]{$y$};
\end{tikzpicture}
\caption{ $P\otimes P$ and $\Sym^2(P)$ for $P=[-1,1]$.}
\label{Fig1}
\end{figure}
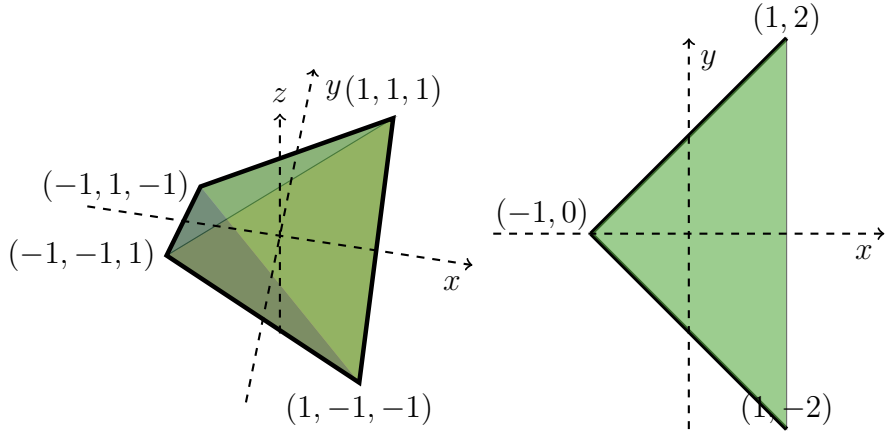
As an illustration of Lemma~\ref{Lemma: Univ} note that for $a_i\in \rr$ we have 
\[\max_{(x_1,x_2)\in P\times P} a_1x_1x_2+a_2x_1+a_3x_2+a_4 = \max_{(x,y,z)\in P\otimes P} a_1x+a_2y+a_3z+a_4.\] 
\end{example}
It is thus possible to linearize certain nonlinear optimization problems via tensor products. In exchange the domain of the problem has been modified and thus the usefulness of this approach is limited by whether or not we have a description of the tensor product amenable to efficient computation. We address these questions in the following section.

\begin{example} \label{TSP} Consider the following natural extensions of the symmetric traveling salesman problem,
\begin{enumerate}
\item{ {\bf Two-tier TSP:} A company has two kinds of traveling salesmen who wish to visit $n$ cities. The travelers have different costs of traveling $c_{ij}=c_{ji}$ and $d_{ij}=d_{ji}$ between cities $i$ and $j$. Moreover, the airline gives the company a discount $-u_{ij}$ whenever both salesmen choose to take a trip between cities $i$ and $j$. The company wishes to find itineraries for both travelers so as to minimize the total cost.}
\item{ {\bf Roman army TSP:} The roman army has $m$ identical legions patrolling $n$ cities with identical traveling cost $c_{ij}=c_{ji}$ between cities $i$ and $j$. The emperor wants the legions to go over as many of the roman roads as possible. Thus two or more legions traveling along the same road have a penalty $p^{r}_{ij}$ whenever $r>0$ legions choose to use the road joining $i,j$. Find routes for all legions so as to minimize total cost.}
\end{enumerate} 
Let $STSP(n)$ be the symmetric traveling salesman polytope (i.e. the convex hull of the adjacency matrices of all Hamiltonian cycles in the complete graph). Choose coordinates for the span of $STSP(n)$ with center on the average of its vertices and obtain a convex body which we also denote as $STSP(n)$. By Lemma~\ref{Lemma: Univ} the two problems above can be linearized on the polytopes $2TSP(n):=STSP(n)\otimes STSP(n)$ and $RTSP(m,n):=\Sym^m(STSP(n)).$ 
\end{example}
The polytopes in the previous example satisfy a remarkable universality property,
\begin{theorem} Fix $m>1$. If $P\subseteq \rr^d$ is a $\{0,1\}$-polytope (i.e. all the components of its vertices are in $\{0,1\}$) then there is an integer $N(d)>0$ such that $P$ is isomorphic to a face of  $2STSP(n)$ and of $RTSP(m,n)$.
\end{theorem}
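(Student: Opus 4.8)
The plan is to combine the Billera--Sarangarajan universality theorem for the symmetric travelling salesman polytope with the description of faces of tensor products and symmetric powers in Theorem~\ref{Faces}. The only extra idea needed is a ``padding'' trick: tensoring a face of $STSP(n)$ with a single vertex, or multiplying such a face by a power of a single vertex inside the symmetric algebra, produces an honest face of the larger polytope by Theorem~\ref{Faces}, while changing nothing up to isomorphism.

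The setup is as follows. Let $C_n\in\MC$ be the marked cone attached to the recentered polytope $STSP(n)$ of Example~\ref{TSP} via Lemma~\ref{Equiv}. Its nonzero faces are exactly the cones over faces of $STSP(n)$, and since every face of a polytope is exposed, all of these are exposed faces of $C_n$. By the Billera--Sarangarajan theorem, applied to the given $d$-dimensional $\{0,1\}$-polytope $P\subseteq\rr^d$, there is an integer $N=N(d)$ such that some face $G$ of $STSP(N)$ is affinely isomorphic to $P$; let $F\subseteq C_N$ be the corresponding exposed face, a convex cone whose compact base is $G$. Finally fix a vertex $p_0$ of $STSP(N)$ and let $\rho=\rr_{\geq 0}v$ with $v=(p_0,1)\in C_N$, so that $\rho$ is an exposed extreme ray of $C_N$ with $g_{C_N}(v)=1$.

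For $2STSP(N)=STSP(N)\otimes STSP(N)$: by Theorem~\ref{Faces}(\ref{tensor}), $F\otimes\rho$ is a face of $C_N\otimes C_N$, and since $F$ is a cone one computes $F\otimes\rho=\{a\otimes v:a\in F\}$. As $v\neq 0$ the linear map $a\mapsto a\otimes v$ is injective, and it carries $g_{C_N}$ to $g_{C_N}\otimes g_{C_N}$ because $g_{C_N}(v)=1$; hence it restricts to an affine isomorphism from the compact base $G$ of $F$ onto the compact base of $F\otimes\rho$, so $P$ is isomorphic to a face of $2STSP(N)$. For $RTSP(m,N)=\Sym^m(STSP(N))$: applying Theorem~\ref{Faces}(\ref{sym}) with $F_1=F$ and $F_2=\dots=F_m=\rho$ shows that $F\cdot\rho^{m-1}={\rm Cone}(a\,v^{m-1}:a\in F)$ is an exposed face of $\Sym^m(C_N)$. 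Since $\Sym^{\bullet}(LC_N)$ is an integral domain and $v^{m-1}\neq 0$, multiplication by $v^{m-1}$ is an injective linear map, again grading-preserving, so it restricts to an affine isomorphism of $G$ onto the compact base of $F\cdot\rho^{m-1}$; hence $P$ is isomorphic to a face of $RTSP(m,N)$. The same $N(d)$ works in both cases, and the second argument is in fact valid for every $m\geq 1$, the case $m=1$ being Billera--Sarangarajan itself.

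The bookkeeping to fill in is routine: the identification of faces of a marked cone with faces of its compact base (already used in the proof of Theorem~\ref{Faces}(\ref{tensor})), and the fact that a grading-preserving linear isomorphism of pointed cones descends to an affine isomorphism of compact bases through Lemma~\ref{Equiv}. The one genuinely delicate point --- and the main obstacle --- is to invoke the universality theorem in exactly the form used above, namely for the \emph{symmetric} TSP polytope, with a dimension-indexed bound $N(d)$, and with the output a face that is \emph{affinely} isomorphic to the prescribed $\{0,1\}$-polytope; the classical construction is usually phrased for the asymmetric polytope and up to combinatorial type, whereas every face-preservation statement in Theorem~\ref{Faces} concerns honest linear/affine isomorphism.
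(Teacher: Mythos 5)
Your padding trick---tensoring the face with a single vertex, or multiplying it by the $(m-1)$-st power of a vertex, and checking via Theorem~\ref{Faces} that this yields a face affinely isomorphic to the original---is exactly the paper's argument, and your verification that these maps are grading-preserving linear injections descending to affine isomorphisms of compact bases is more careful than what the paper writes down. However, the step you yourself flag as the ``one genuinely delicate point'' is a genuine gap as your proof stands: the Billera--Sarangarajan universality theorem in the form the paper cites (\cite[Theorem 3.1]{BiS}) produces $P$ as a face of the \emph{asymmetric} travelling salesman polytope $ASTP(N)$, not of $STSP(N)$, so your direct invocation of it for the symmetric polytope is not available off the shelf. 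The paper closes this gap with a second citation: by Karp's reduction \cite{K}, $ASTP(N)$ appears as a face of $STSP(2N)$, and composing the two face embeddings realizes $P$ as a face of $STSP(2N)$; your padding argument then applies verbatim with $N$ replaced by $2N$. So the single missing ingredient is Karp's asymmetric-to-symmetric reduction (or an equivalent universality statement for the symmetric polytope); once it is inserted, your proof coincides with the paper's.
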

\begin{proof} By a result of Billera and Sarangarajan~\cite[Theorem 3.1]{BiS} $P$ is isomorphic to a face of the asymmetric traveling salesman polytope $ASTP(N)$  for some $N$. By a result of Karp~\cite{K} the $ASTP(N)$ appears as a face of $STSP(2N)$ and by Theorem~\ref{Faces} $STSP(2N)$ appears as a face of $2STSP(2N)$ (resp. $RTSP(m,2N)$) by taking the tensor product (resp. the product) of $STSP(2N)$ and any vertex (resp. and the $(m-1)$-st power of any vertex).\end{proof}

\section{Approximating convex hulls of polynomial images in the presence of a measure.}\label{Sec: Approx}

With the purpose of computing linearization functors we introduce an approximation scheme for convex hulls of polynomial images of compact sets via projections of spectrahedra of interest in its own right.
Let $B\subseteq \rr^n$ be a compact set, let $m$ be a finite measure supported on $B$ (i.e. every open neighborhood of every point of $B$ has positive $m$ measure) and let $T:\rr^n\rightarrow \rr^m$ be a polynomial map with components $T_i(x_1,\dots,x_n)$, $1\leq i\leq m$. The following approximation method for the convex hull of $T(B)$ is an extension of the ideas of Barvinok and Veomett~\cite{BV} on semidefinite approximations of convex sets.
 
\begin{definition} For $\lambda\in \rr^m$ and $k\geq 0$ define the symmetric bilinear form $\phi^k_\lambda$ on the vector space of polynomials of degree at most $k$ in $\rr^n$ by 
\[\phi^k_\lambda(a(x),b(x)):=\large\int_B \left(1+\sum_{i=1}^m \lambda_iT_i(x)\right)a(x)b(x)dm(x).\]
\end{definition}

\begin{lemma} \label{Approx} Define $Q_k:=\{\lambda\in \rr^m: \phi_\lambda^k\succeq 0\}$. The following statements hold:
\begin{enumerate}
\item{ \label{spectrahedron} For every $k$, $Q_{k+1}\subseteq Q_k$, the set $Q_k$ is a spectrahedron and $Q_k^{\circ}$ is an SDR set.}
\item{ \label{inclusion} ${\rm Conv}(T(B))^{\circ}\subseteq Q_k$ for every $k$.}
\item{ \label{equality} ${\rm Conv}(T(B))^{\circ}=\bigcap_{k=1}^\infty Q_k$.}
\item{ \label{density} ${\rm Conv}(T(B))=\overline{\bigcup_k (Q_k^{\circ})}.$}
\item{ \label{Finite} If $B$ is a finite set and $m$ the counting measure on $B$ then ${\rm Conv}(T(B))=Q_k^{\circ}$ for some $k$.}
\end{enumerate}
\end{lemma}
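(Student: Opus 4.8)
The plan is to establish the five claims roughly in the listed order, exploiting that $\phi^k_\lambda$ depends \emph{affinely} on $\lambda$ (its matrix in a fixed monomial basis has entries that are affine functions of $\lambda$), so that $Q_k$ is cut out by a linear matrix inequality and is therefore a spectrahedron; its polar is then an SDR set by the standard fact that polars of spectrahedra are SDR (this is where one uses that $Q_k$ is compact or at least that $0$ is interior — note $0\in Q_k$ since $\phi^k_0$ is the Gram form of a genuine inner product on polynomials, hence positive definite). The nesting $Q_{k+1}\subseteq Q_k$ is immediate: if $\phi^{k+1}_\lambda\succeq 0$ then restricting the form to the subspace of polynomials of degree at most $k$ keeps it PSD. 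For part (\ref{inclusion}), I would observe that a point $\mu\in{\rm Conv}(T(B))^{\circ}$ means $1+\sum_i \mu_i y_i\ge 0$ for every $y\in{\rm Conv}(T(B))$, in particular $1+\sum_i \mu_i T_i(x)\ge 0$ for all $x\in B$; multiplying by $a(x)^2$ and integrating against the nonnegative measure $m$ shows $\phi^k_\mu(a,a)\ge 0$, i.e. $\mu\in Q_k$.

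The heart of the argument is part (\ref{equality}), the reverse inclusion $\bigcap_k Q_k\subseteq {\rm Conv}(T(B))^{\circ}$. Suppose $\lambda\in Q_k$ for all $k$; I want to show $g_\lambda(x):=1+\sum_i\lambda_i T_i(x)\ge 0$ on $B$, since that is equivalent to $\lambda\in{\rm Conv}(T(B))^{\circ}$. The condition $\phi^k_\lambda\succeq 0$ for all $k$ says that the linear functional $a\mapsto \int_B g_\lambda\, a\, dm$ is a positive semidefinite form on $\rr[x]_{\le k}$ for every $k$, hence the functional $a\mapsto \int_B g_\lambda\, a^2\, dm$ is $\ge 0$ on \emph{all} polynomials. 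If $g_\lambda$ were negative at some $p\in B$, then by continuity $g_\lambda<-\varepsilon$ on a neighborhood $U$ of $p$ in $B$; choosing polynomials $a_N$ that concentrate $L^2(m)$-mass on $U$ (e.g. by Stone–Weierstrass approximate the indicator of a slightly smaller ball and use that $m$ is a measure with full support so $m(U)>0$, controlling the tail $\int_{B\setminus U} g_\lambda\, a_N^2\, dm$ against $\int_U g_\lambda\, a_N^2\, dm$) one forces $\int_B g_\lambda\, a_N^2\, dm<0$, a contradiction. The main obstacle here is making this concentration argument rigorous when $g_\lambda$ is unbounded-looking only in the sense that one must dominate the contribution away from $p$; on a compact $B$ with $g_\lambda$ a fixed polynomial this is just a matter of choosing $a_N$ supported on $U$ well enough, so I would phrase it as: pick a continuous bump $\chi$ with $\chi=1$ near $p$, $\operatorname{supp}\chi\subseteq U$, Stone–Weierstrass gives polynomials $a_N\to \chi$ uniformly on $B$, and then $\int_B g_\lambda a_N^2\,dm\to \int_U g_\lambda\chi^2\,dm \le -\varepsilon\, m(\{\chi=1\})<0$.

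For part (\ref{density}) I would take polars in (\ref{equality}): since $\{Q_k\}$ is a decreasing family of closed convex sets each containing ${\rm Conv}(T(B))^{\circ}$, and ${\rm Conv}(T(B))$ is compact (as $T$ is continuous and $B$ compact) with $0$ in its interior — wait, one only needs $0\in{\rm Conv}(T(B))$ for the bipolar theorem, and even without that one uses $({\bigcap_k Q_k})^{\circ}=\overline{{\rm Conv}(\bigcup_k Q_k^{\circ})}=\overline{\bigcup_k Q_k^{\circ}}$, the last equality because $\{Q_k^{\circ}\}$ is an increasing nested family of convex sets so their union is already convex; combined with the bipolar theorem $({\rm Conv}(T(B))^{\circ})^{\circ}={\rm Conv}(T(B))$ (valid once one checks $0\in {\rm Conv}(T(B))$, or states the result for the closed convex hull) this gives the claim. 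Finally, for part (\ref{Finite}): when $B=\{b_1,\dots,b_r\}$ is finite and $m$ counting measure, $\phi^k_\lambda(a,b)=\sum_{j=1}^r g_\lambda(b_j)\, a(b_j)\, b(b_j)$; for $k$ large enough the evaluation map $\rr[x]_{\le k}\to \rr^r$, $a\mapsto (a(b_j))_j$, is surjective (take $k$ past the vanishing degree of an ideal of points), so $\phi^k_\lambda\succeq 0$ is then \emph{equivalent} to $g_\lambda(b_j)\ge 0$ for all $j$, i.e. $Q_k={\rm Conv}(T(B))^{\circ}$ exactly; polarizing gives $Q_k^{\circ}={\rm Conv}(T(B))$, using that $T(B)$ is a finite set so its convex hull is a closed polytope and the bipolar theorem applies after recentring so $0$ is interior (or invoking it for closed convex hulls).
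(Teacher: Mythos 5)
Your proposal is correct and follows essentially the same route as the paper: spectrahedrality from the affine dependence of $\phi^k_\lambda$ on $\lambda$ (with the polar SDR by the standard Gouveia--Netzer fact), part (2) by integrating $(1+\sum_i\lambda_iT_i)a^2$ against $m$, part (3) by a Stone--Weierstrass bump-function concentration argument using full support of $m$, part (4) by polarity, and part (5) via interpolation polynomials on the finite set. No substantive differences to report.
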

\begin{proof}  (\ref{spectrahedron}.) The inclusion occurs because a polynomial of degree at most $k$ also has degree at most $k+1$. The set $Q_k$ is a spectrahedron because the function $\phi_\lambda^k$ is affine linear in $\lambda$. By~\cite[Proposition 3.2]{GN}, $Q_k^{\circ}$ is the projection of a spectrahedron. (\ref{inclusion}.) An affine linear function is nonnegative on a set $S$ iff it is nonnegative in its convex hull. As a result   
${\rm Conv}(T(B))^{\circ}=\{\lambda\in \rr^m: \forall x\in B\left(\sum_i \lambda_i T_i(x)\geq -1\right)\}$. If $\lambda\in {\rm Conv}(T(B))^{\circ}$, $k\in \mathbb{N}$ and $a(x)$ is any polynomial of degree at most $k$ then the quantity $\int_B \left(1+\sum_{i=1}^m \lambda_iT_i(x)\right)a^2(x)dm(x)$ is the integral of a nonnegative function on $B$ and thus nonnegative. Hence $\lambda\in Q_k$. (\ref{equality}.) Suppose $\lambda\in \bigcap_{k=1}^{\infty} Q_k$ and let $\delta(x):=1+\sum \lambda_iT_i(x)$. If $\lambda\not\in {\rm Conv}(T(B))^{\circ}$ then there exists $x^*\in B$ such that $\delta(x^*)<0$. By continuity of $\delta$ there exists radii $0<r_1<r_2$  and balls $B_{r_i}(x^*)$ such that $\delta(x)<0$ in $B_{r_2}(x^*)\cap B$. Let $h(x)$ be a continuous function such that $h(x)=1$ in $B_{r_1}(x)$ and $h(x)=0$ outside $B_{r_2}(x^*)$. By the Stone-Weierstrass theorem there is a sequence of polynomials $s_n(x)$ converging uniformly to $h(x)$ in $B$ and by finiteness of $m$ and the dominated convergence theorem 
\[ \lim_{n\rightarrow \infty}\int_B \delta(x) s_n(x)^2dm=\int_B \delta(x)h(x)^2dm<0\]
it follows that for some sufficiently large $N$ the integral $\int_B \delta(x) s_N(x)^2dm<0$ contraddicting the fact that $\phi_\lambda^{\deg (s_N^2) }$ is positive semidefinite.
(\ref{density}.) By part (\ref{equality}.) we have the equalities
\[\left(\bigcup_{k=1}^\infty Q_k^{\circ}\right)^{\circ} = \bigcap_{k=1}^{\infty}(Q_k^{\circ})^{\circ} = \bigcap_{k=1}^{\infty} Q_k = {\rm Conv}(T(B))^{\circ}\]
Taking polars and using the fact that ${\rm Conv}(T(B))$ is closed we obtain the claimed equality. (\ref{Finite}.) Let $B=\{b_1,\dots, b_s\}$. Since $B$ is finite the affine coordinate ring $\rr[B]$ is a product of fields and thus there exist interpolation  polynomials $p_i(x)$ such that $p_i(b_j)=\delta_{ij}$ of degree at most $k$. If $\lambda \in Q_k$ then $1+\sum \lambda_iT_i(b_j)=\phi_{\lambda}^k(p_j,p_j)\geq 0$ for $1\leq j\leq s$ so $\lambda\in {\rm Conv}(T(B))^{\circ}$ and the claim follows by polarity.  
\end{proof}
\begin{remark} By Lemma~\ref{Approx} part $(\ref{density})$ we call the above construction an accurate approximation scheme for ${\rm Conv}(T(B))$ via SDR sets. Note that if $B$ is a convex body then the restriction of the Lebesgue measure to $B$ satisfies the necessary hypothesis for the above approximation scheme. 
\end{remark} 
\begin{remark} Via positivstellensatz-type results it is possible to obtain approximations of ${\rm Conv}(T(B))$ from above for some compact sets $B$ (for instance basic closed semialgebraic). However, this approach has two drawbacks. First, it does not apply to arbitrary convex bodies $B$ and second, when the corresponding quadratic module is not finitely generated it is not clear how to write the resulting approximating convex bodies as projections of spectrahedra. Because of these two reasons we find the above approximation scheme preferable. A case of much interest when the corresponding quadratic module is not finitely generated is for instance the positivestellensatz for projections of spectrahedra of Gouveia and Netzer~\cite[Theorem 5.1]{GN}.
\end{remark}

As an application of Lemma~\ref{Approx} we obtain accurate approximation schemes for linearization functors.
\begin{theorem} Let $A,B\in \MC$ and $n>0$ an integer and let $P_A:=A\cap g_A^{-1}(1)$ and $P_B:=B\cap g_B^{-1}(1)$.
\begin{enumerate}
\item{Assume $m_A,m_B,m_{B^{\circ}}$ are finite measures supported in $P_A, P_B$ and $P_{B^{\circ}}$. These measures determine accurate approximation schemes for $A\otimes B$, $\Sym^n(A)$ and $\Hom_\AC(A,B)$ by SDR sets.}
\item{If $A$ and $B$ are polyhedra and $m_A$, $m_B$ and $m_{B^\circ}$ are the counting measures on the vertices of $P_A, P_B$ and $P_{B^\circ}$ respectively then the approximation schemes converge after finitely many steps and we obtain projected spectrahedral representations of the polyhedra  $A\otimes B$, $\Sym^n(A)$ and $\Hom_\AC(A,B)$.}
\end{enumerate}
\end{theorem}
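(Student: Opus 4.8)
The plan is to realize each of the three constructions, via the equivalence of Lemma~\ref{Equiv}, as the closed convex hull of a polynomial image of a compact set carrying a natural finite measure, and then to quote Lemma~\ref{Approx}; the case of $\Hom_\AC(A,B)$ is reduced to the other two by the adjunction $\Hom_\AC(A,B)\cong(A\otimes B^*)^*$ of Theorem~\ref{Lemma: Univ}(\ref{Adjoint}) together with the fact, recorded in the Remark after the definition of duality in Section~\ref{Categories}, that duality of $\AC$-cones corresponds to polarity of bases.

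First I would set up the explicit models using the formulas in the Remarks of Section~\ref{Sec: Functors}. Writing $P_{B^\circ}$ for the polar $P_B^\circ$, which is exactly the base of the dual cone $B^*$, the bases of $A\otimes B$, of $\Sym^n(A)$ and of $A\otimes B^*$ are, respectively,
\[{\rm Conv}\, T_\otimes(P_A\times P_B),\qquad {\rm Conv}\, T_{\Sym}\big((P_A)^n\big),\qquad {\rm Conv}\, T_\otimes(P_A\times P_{B^\circ}),\]
where $T_\otimes(p_1,p_2)=(p_1,p_2,p_1\otimes p_2)$ is polynomial of degree $2$ and $T_{\Sym}(p_1,\dots,p_n)=(e_1(p_1,\dots,p_n),\dots,e_n(p_1,\dots,p_n))$ is polynomial of degree $n$. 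The sets $P_A\times P_B$, $(P_A)^n$ and $P_A\times P_{B^\circ}$ are compact, and the product measures $m_A\times m_B$, the $n$-fold product $m_A\times\cdots\times m_A$, and $m_A\times m_{B^\circ}$ are finite and fully supported on them (a neighbourhood of a point of a finite product contains a box, and the product measure of a box is a product of positive numbers). Applying Lemma~\ref{Approx}(\ref{spectrahedron})--(\ref{density}) to $T_\otimes$ on $P_A\times P_B$ and to $T_{\Sym}$ on $(P_A)^n$ yields nested families of SDR sets $Q_k^{\circ}$ with $\overline{\bigcup_k Q_k^{\circ}}$ equal to the base of $A\otimes B$, respectively of $\Sym^n(A)$; through Lemma~\ref{Equiv} this is the asserted accurate SDR approximation scheme for those two cones.

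For $\Hom_\AC(A,B)$ I would run the same machine on $A\otimes B^*$, using $m_A\times m_{B^\circ}$ on $P_A\times P_{B^\circ}$. Lemma~\ref{Approx}(\ref{spectrahedron}) and~(\ref{equality}) then produce a decreasing sequence of spectrahedra $Q_k$ with $\bigcap_k Q_k = \big({\rm Conv}\,T_\otimes(P_A\times P_{B^\circ})\big)^{\circ}=(P_A\otimes P_{B^\circ})^{\circ}$, and by the adjunction together with the polarity--duality dictionary the right-hand side is precisely the base of $\Hom_\AC(A,B)$. Thus the spectrahedra $Q_k$, which are in particular SDR sets, form an accurate (outer) approximation scheme for $\Hom_\AC(A,B)$; if one wants the members of the scheme to be compact one simply intersects each $Q_k$ with a fixed closed ball containing the base, which is still a spectrahedron and does not change the intersection.

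For the second assertion, assume $A$ and $B$ are polyhedra and the measures are the counting measures on the finite vertex sets $V_A=\mathrm{vert}(P_A)$, $V_B=\mathrm{vert}(P_B)$ and $V_{B^\circ}=\mathrm{vert}(P_{B^\circ})$ (note that $P_{B^\circ}$ is again a polytope). In each case the compact set fed to Lemma~\ref{Approx} is then finite: $V_A\times V_B$, $(V_A)^n$, or $V_A\times V_{B^\circ}$. Here Theorem~\ref{Faces} is essential: it tells us that the extreme points of $P_A\otimes P_B$, of $\Sym^n(P_A)$ and of $P_A\otimes P_{B^\circ}$ are exactly the values of $T_\otimes$, resp.\ $T_{\Sym}$, on tuples of vertices, so that convexifying the image of the finite vertex set already gives the full base. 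Hence Lemma~\ref{Approx}(\ref{Finite}) applies and shows that this base equals $Q_k^{\circ}$ for some finite $k$, a projection of a spectrahedron. For $A\otimes B$ and $\Sym^n(A)$ this exhibits the base, hence by Lemma~\ref{Equiv} the cone itself, as an SDR set; for $\Hom_\AC(A,B)$ the set $Q_k^{\circ}$ is the base of $A\otimes B^*$, so its polar $Q_k$ --- a spectrahedron by Lemma~\ref{Approx}(\ref{spectrahedron}) --- is the base of $\Hom_\AC(A,B)$, exhibiting it as a spectrahedral, a fortiori SDR, cone.

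I expect the measure-theoretic verifications and the identification of the polynomial maps $T_\otimes,T_{\Sym}$ to be routine. The point that needs real care is the treatment of $\Hom_\AC(A,B)$: one must correctly identify the base of $B^*$ with $P_{B^\circ}$, keep straight that duality of $\AC$-cones is polarity of bases (so that the outer spectrahedral hierarchy $Q_k$ for $A\otimes B^*$ becomes the hierarchy \emph{for} $\Hom_\AC(A,B)$, rather than an inner SDR hierarchy), and handle the harmless truncation needed to re-enter $\MC$ through Lemma~\ref{Equiv}. A secondary subtlety, in the second assertion, is the use of Theorem~\ref{Faces}, which is exactly what guarantees that ${\rm Conv}\,T$ of the vertex set is already the whole base, so that the finite case of Lemma~\ref{Approx} can be invoked.
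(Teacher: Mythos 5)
Your proposal is correct and follows essentially the same route as the paper: realize the bases of $A\otimes B$ and $\Sym^n(A)$ as convex hulls of polynomial images of $P_A\times P_B$ and $(P_A)^n$, feed the product measures into Lemma~\ref{Approx}, reduce $\Hom_\AC(A,B)$ to $(A\otimes B^*)^*$ via the adjunction, and in the polyhedral case invoke Theorem~\ref{Faces} so that the finite-support case of Lemma~\ref{Approx} applies. The extra care you take with the support of product measures, the identification of $P_{B^\circ}$ with the base of $B^*$, and the outer-versus-inner nature of the hierarchy for the Hom cone only makes explicit points the paper leaves implicit.
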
 
\begin{proof} Let $E_A, E_B$ be the vector space structures on $g_A^{-1}(1)$ and $g_B^{-1}(1)$ obtained by placing the origin in $s_A(1)$ and $s_B(1)$ respectively. Let $T: E_A\times E_B\rightarrow E_A\times E_B\times(E_A\otimes E_B)$ be the map sending $(a,b)\rightarrow (a,b,a\otimes b)$ and note that this is a polynomial map satisfying $P_A\otimes P_B={\rm Conv}(T(P_A\times P_B))$. Since $m_A\times m_B$ is a finite measure on the compact set $P_A\times P_B$ the Lemma~\ref{Approx} gives us an accurate approximation scheme for the tensor product via SDR sets. The corresponding $\MC$ cones approximate the tensor product $A\otimes B$ as claimed. Similarly $\Sym^n(P_A)\subseteq \bigoplus_{j=1}^n \Sym^j(E_A)$ is the convex hull of the image of $P_A^n$ under the polynomial map $T(p_1,\dots,p_n)=(e_n(p_1,\dots, p_n),\dots, e_1(p_1,\dots, p_n))$ where the $e_i$ the elementary symmetric polynomials in $n$ variables. The product measure $m_A^{n}$ supported on $P_A^n$ allows us, via Lemma~\ref{Approx} to construct an accurate approximation scheme for symmetric powers via SDR sets. The corresponding $\MC$ cones approximate $\Sym^n(A)$ as claimed.
Now, by Theorem~\ref{Lemma: Univ} part (\ref{Adjoint}.) we know  $\Hom_\AC(A,B) = (A\otimes B^*)^*$ and the statement follows form the accurate approximation scheme for tensor products from the first paragraph. In particular we can write $\Hom_\AC(A,B)$ as a countable intersection of spectrahedra. By Theorem~\ref{Faces} the vertices of the polytopes $P_A\otimes P_B$ and $\Sym^n(P_A)$ are precisely products of vertices. It follows that these convex sets are the convex hulls under polynomial maps of finite sets and the conclusion follows by Lemma~\ref{Approx} part (\ref{Finite}.). Since $\Hom_\AC(A,B) = (A\otimes B^*)^*$ the accurate approximation scheme for $A\otimes B^*$ allows us to give a spectrahedral description of $\Hom_\AC(A,B)$.
\end{proof}

\section{Computing linearization functors}\label{Sec: Computing}

Next we study how our operations behave under morphisms. As a result we obtain exact computations for tensor powers, symmetric powers and Homs of SDR cones.
\begin{definition}  Let $f\in \Hom_\AC(C,D)$. The morphism is strongly injective if the linear function $f:LC\rightarrow LD$ is injective and satisfies $f^{-1}(D)=C$. We denote strongly injective morphisms as $f:C \hookrightarrow D$.The morphism has dense image if $\overline{f(C)}=D$. We denote it by $f:C\twoheadrightarrow D$.
\end{definition}
Recall that for cones in $\AC$ there are morphisms with non-closed image while in $\MC$ a morphism with dense image is surjective (see Section~\ref{Categories}). Strongly injective morphisms and morphisms with dense image are closely related via duality,
\begin{lemma}\label{SIDI} $f:C\hookrightarrow D$ if and only if $f^*:D^*\twoheadrightarrow C^*$.\end{lemma}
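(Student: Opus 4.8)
The plan is to prove both implications by unwinding the definitions and using standard duality facts for closed convex cones, namely that $(C^*)^* = C$ for a closed cone $C$, that $\overline{f(C)} = (f^{*-1}(C^*))^*$ is the bipolar characterization of the image, and that taking duals reverses inclusions. I would first record the elementary dictionary between the conditions: $f : C \hookrightarrow D$ means the linear map $f : LC \to LD$ is injective and $f^{-1}(D) = C$, while $f^* : D^* \twoheadrightarrow C^*$ means $\overline{f^*(D^*)} = C^*$. Since all cones here are full-dimensional, $f$ injective is equivalent to $f^* : LD^* \to LC^*$ being surjective as a linear map, so the linear-algebraic halves of the two conditions already match up; the content is in matching the condition $f^{-1}(D) = C$ with the density condition $\overline{f^*(D^*)} = C^*$.

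For the forward direction, assume $f : C \hookrightarrow D$. First I would observe that $f^*(D^*) \subseteq C^*$ always holds for any morphism (this is in the duality definition in the excerpt), so $\overline{f^*(D^*)} \subseteq C^*$. For the reverse inclusion it suffices, by taking duals (which reverses the inclusion and is an involution on closed cones), to show $C \subseteq (f^*(D^*))^*$. Now $(f^*(D^*))^* = \{ v \in LC : \langle v, f^*\mu\rangle \geq 0 \text{ for all } \mu \in D^*\} = \{ v : \langle f(v), \mu\rangle \geq 0 \text{ for all } \mu \in D^*\} = f^{-1}(D^{**}) = f^{-1}(D)$, using $D$ closed. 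By hypothesis $f^{-1}(D) = C$, so $C \subseteq (f^*(D^*))^*$ and hence $\overline{f^*(D^*)} = C^*$; combined with $f^*$ surjective as a linear map (since $f$ is injective) this gives $f^* : D^* \twoheadrightarrow C^*$.

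For the converse, assume $f^* : D^* \twoheadrightarrow C^*$, i.e. $\overline{f^*(D^*)} = C^*$ and $f^*$ surjective as a linear map. Then $f = f^{**}$ is injective as a linear map, so the only thing left is to show $f^{-1}(D) = C$. The inclusion $C \subseteq f^{-1}(D)$ is just $f(C) \subseteq D$. For the reverse inclusion, run the computation from the previous paragraph in reverse: $f^{-1}(D) = f^{-1}(D^{**}) = (f^*(D^*))^* = (\overline{f^*(D^*)})^* = (C^*)^* = C$, where the third equality uses that a cone and its closure have the same dual and the last uses $C$ closed. This closes the argument.

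The only point requiring a little care — and the step I would watch most closely — is the identity $(f^*(D^*))^* = f^{-1}(D)$: one must be sure the bipolar $D^{**}$ really equals $D$ (this is where closedness and pointedness of the $\AC$ cone $D$ enter) and that the adjointness $\langle f^*\mu, v\rangle = \langle \mu, f(v)\rangle$ is being applied on the correct vector spaces $LC$, $LD$ and their duals. Everything else is a formal manipulation with polars, so I do not expect genuine obstacles; the lemma is essentially the observation that "injective with saturated preimage" and "dense image" are polar-dual conditions, exactly as for the analogous statement about subspaces and quotients in linear algebra.
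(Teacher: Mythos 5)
Your proof is correct and follows essentially the same route as the paper's: both implications reduce to the identity $(f^*(D^*))^* = f^{-1}(D^{**}) = f^{-1}(D)$ together with bipolarity of the closed cones $C$ and $D$, which is exactly the computation the paper carries out element\-wise (the paper merely packages the converse as ``dense image implies the dual is strongly injective'' plus biduality). One small point: surjectivity of $f^*$ as a linear map is not part of the definition of $f^*:D^*\twoheadrightarrow C^*$, so in the converse you should derive it rather than read it off the hypothesis --- it follows because $f^*(LD^*)$ is a closed subspace containing $f^*(D^*)$, whose closure $C^*$ is full-dimensional since $C$ is pointed.
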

\begin{proof} Assume $f:C\hookrightarrow D$ and let $h$ be a linear functional nonnegative in $f^*(D^*)$. We will show that $h$ must be nonnegative in $C^*$ and conclude that $\overline{f^*(D^*)}=C^*$.  Via the canonical identification between $((LC)^*)^*$ and $LC$ we can assume that $h: (LC)^*\rightarrow \rr$ is evaluation at a point $p\in LC$. By our assumption for every $g\in D^*$ $g(f(p))\geq 0$ and thus $f(p)\in D$. Since $f$ is strongly injective this implies that $p\in D$ and thus $h$ is nonnegative in $C^*$ as claimed.
For the converse we will show that if $f: C\twoheadrightarrow D$ then $f^*:D^*\hookrightarrow C^*$ and conclude that the claim holds by bi-duality. Suppose that $h\in (LD)^*$ and that $f^*: (LD)^*\rightarrow (LC)^*$ maps $h$ to $C^*$. It follows that for every $c\in C$ $h(f(c))\geq 0$ and since $f(C)$ is dense in $D$ that $h\in D^*$ so $f^*$ is strongly injective as claimed.
\end{proof}
If $f\in \Hom_\AC(C,D)$ and $A_1\in \AC$ then we denote by $\hat{f}: \Hom_\AC(A_1,C)\rightarrow \Hom_\AC(A_1,D)$ the morphism obtained by composition with $f$. Using this notation we have,
\begin{theorem} \label{morp} For $i=1,2$ let $A_i,B_i$ and $C$ be cones in $\AC$ and let $n>0$ be an integer. The following statements hold:
\begin{enumerate}
\item{ \label{DI} If $f:A_1\twoheadrightarrow B_1$ then $\Sym^n(f): \Sym^n(A_1)\twoheadrightarrow \Sym^n(B_1)$, $f^*:\Hom_\AC(B_1,C)\hookrightarrow \Hom_\AC(A_1,C)$ and $\hat{f}^*:\Hom_\AC(C,B_1^*)\hookrightarrow \Hom_\AC(C,A_1^*)$. If moreover $g:A_2\twoheadrightarrow B_2$ then $f\otimes g:A_1\otimes A_2\twoheadrightarrow B_1\otimes B_2$.}
\item{ \label{SI} If $h:A_1\hookrightarrow B_1$ then $\hat{h}:\Hom_\AC(C,A_1)\hookrightarrow \Hom_\AC(C,B_1)$, $h^{**}:\Hom_\AC(A_1^*,C)\hookrightarrow \Hom_\AC(B_1^*,C)$, $\Sym^n(h):\Sym^n(A_1^*)^*\hookrightarrow \Sym^n(B_1^*)^*$ and we have inclusions $\Sym^n(A_1)\subseteq \Sym^n(h)^{-1}(\Sym^n(B_1))\subseteq \Sym^n(A_1^*)^*$. If moreover $t:A_2\hookrightarrow B_2$ then $h\otimes t: (A_1^*\otimes A_2^*)^*\hookrightarrow (B_1^*\otimes B_2^*)^*$ and we have inclusions 
$A_1\otimes A_2\subseteq (h\otimes t)^{-1}(B_1\otimes B_2)\subseteq (A_1^*\otimes A_2^*)^*$.}
\end{enumerate}
\end{theorem}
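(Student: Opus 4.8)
The plan is to reduce the whole statement to four ``core'' assertions about a single functor applied to a single dense-image or strongly injective morphism, and then to derive each remaining item by transposing via Lemma~\ref{SIDI}. The four core assertions are: \emph{(a)} if $f\colon A\twoheadrightarrow B$ and $g\colon A'\twoheadrightarrow B'$ then $f\otimes g\colon A\otimes A'\twoheadrightarrow B\otimes B'$; \emph{(b)} if $f\colon A\twoheadrightarrow B$ then $\Sym^n(f)\colon\Sym^n(A)\twoheadrightarrow\Sym^n(B)$; \emph{(c)} if $f\colon A\twoheadrightarrow B$ then precomposition with $f$ is a strongly injective morphism $\Hom_\AC(B,C)\hookrightarrow\Hom_\AC(A,C)$; and \emph{(d)} if $h\colon A\hookrightarrow B$ then postcomposition with $h$ is a strongly injective morphism $\Hom_\AC(C,A)\hookrightarrow\Hom_\AC(C,B)$. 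Granting these, part~\ref{DI} is finished: its first clause is \emph{(b)}, its second is \emph{(c)}, its third is \emph{(d)} applied to $f^{*}\colon B_1^{*}\hookrightarrow A_1^{*}$ (strongly injective by Lemma~\ref{SIDI}), and its last is \emph{(a)}. Part~\ref{SI} follows similarly: $\hat h$ is \emph{(d)}; $h^{**}$ is \emph{(c)} applied to $h^{*}\colon B_1^{*}\twoheadrightarrow A_1^{*}$ (dense image by Lemma~\ref{SIDI}); the strong injectivity of $\Sym^n(h)\colon\Sym^n(A_1^{*})^{*}\hookrightarrow\Sym^n(B_1^{*})^{*}$ is \emph{(b)} applied to $h^{*}$ followed by Lemma~\ref{SIDI}, after identifying $\Sym^n(h^{*})$ with $\Sym^n(h)^{*}$; and the tensor clause is \emph{(a)} applied to $h^{*},t^{*}$ followed by Lemma~\ref{SIDI}, after identifying $h^{*}\otimes t^{*}$ with $(h\otimes t)^{*}$.

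For \emph{(a)} and \emph{(b)} I would argue uniformly. Each of $f\otimes g$ and $\Sym^n(f)$ is a morphism in $\AC$ by the functoriality established in Section~\ref{Sec: Functors}, and it carries the generating rays $a\otimes a'$ of $A\otimes A'$ (resp.\ $a_1\cdots a_n$ of $\Sym^n(A)$) to $f(a)\otimes g(a')$ (resp.\ $f(a_1)\cdots f(a_n)$), so its image is $\C\{f(a)\otimes g(a')\}$ (resp.\ $\C\{f(a_1)\cdots f(a_n)\}$). Since $f(A)$ is dense in $B$, $g(A')$ is dense in $B'$, and tensoring (resp.\ $n$-fold multiplication) is continuous, every generator $b\otimes b'$ (resp.\ $b_1\cdots b_n$) of the target lies in the closure of that image; as the closure of a convex cone is again a convex cone, the target itself lies in the closure. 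The reverse inclusion is functoriality together with closedness of $B\otimes B'$ and $\Sym^n(B)$, so the image is dense.

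For \emph{(c)}: since $\overline{f(A)}=B$ is full-dimensional, the linear map $f$ is surjective, hence precomposition with $f$ is injective on $\Hom(LB,LC)$; it maps $\Hom_\AC(B,C)$ into $\Hom_\AC(A,C)$ by functoriality; and if $\phi\circ f$ sends $A$ into $C$ then by continuity $\phi$ sends $\overline{f(A)}=B$ into $\overline C=C$, which gives the equality of preimages required for strong injectivity (here closedness of $C$ is essential). For \emph{(d)}: $h$ being an injective linear map makes postcomposition injective; it maps $\Hom_\AC(C,A)$ into $\Hom_\AC(C,B)$ by functoriality; and if $h\circ\psi$ sends $C$ into $B$ then for each $c\in C$ we have $h(\psi(c))\in B$, so $\psi(c)\in h^{-1}(B)=A$, again giving the required equality of preimages. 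Finally, the two ``sandwich'' inclusions in part~\ref{SI}: the left-hand inclusions hold because $\Sym^n(h)$ and $h\otimes t$ are morphisms (functoriality), and the right-hand inclusions follow from the strong injectivity just proved together with the elementary containments $\Sym^n(B)\subseteq\Sym^n(B^{*})^{*}$ and $B\otimes B'\subseteq(B^{*}\otimes B'^{*})^{*}$; these two hold because the canonical pairing of a product of elements of a cone against a product of elements of its dual cone is a permanent of a nonnegative matrix, hence nonnegative.

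The only real difficulty is bookkeeping rather than conceptual. To make the phrase ``follows by duality'' precise one must fix once and for all the canonical isomorphisms $\Sym^n(V)^{*}\cong\Sym^n(V^{*})$, $(V\otimes W)^{*}\cong V^{*}\otimes W^{*}$ and the biduality isomorphism, and check that under them $\Sym^n(f^{*})=\Sym^n(f)^{*}$, $(f\otimes g)^{*}=f^{*}\otimes g^{*}$ and $(h^{*})^{*}=h$, so that the assertions of part~\ref{SI} are genuinely the transposes (via Lemma~\ref{SIDI}) of those of part~\ref{DI}. Everything else is a routine application of density, continuity, and closedness.
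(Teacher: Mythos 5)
Your proposal is correct and follows essentially the same route as the paper: establish density of images under $\otimes$ and $\Sym^n$ via the generators, prove strong injectivity of pre/post-composition directly, and obtain every remaining clause by transposing through Lemma~\ref{SIDI}. You additionally spell out a few points the paper leaves implicit (linear injectivity of the Hom maps, and the ``immediate'' containments $\Sym^n(B)\subseteq\Sym^n(B^*)^*$ and $B\otimes B'\subseteq(B^*\otimes B'^*)^*$ via nonnegativity of the canonical pairings), which is fine.
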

\begin{proof} (\ref{DI}.) If $A_i\twoheadrightarrow B_i$ then $\Sym^n(A_1)\twoheadrightarrow \Sym^n(B_1)$ and $A_1\otimes A_2\twoheadrightarrow B_1\otimes B_2$ because by our assumption the images of the generators of the domain are dense in the generators of the codomain. If $f: A_1\twoheadrightarrow B_1$ and $g\in \Hom(LB_1,LC)$ satisfies $f^*(g) \in \Hom_\AC(A_1,C)$ then for every $a\in A_1$ we have $g(f(a))\in C$. Since $f(A_1)$ is dense in $B_1$ it follows that $g(y)\in C$ for $y\in B_1$ so $g\in \Hom_\AC(B_1,C)$ and thus $f^*$ is strongly injective as claimed. Finally, by Lemma~\ref{SIDI} we have $f^*:B_1^*\hookrightarrow C_1^*$ and using the first claim of part (\ref{SI}.) to be shown next we conclude $\hat{f}^*:\Hom_\AC(C,B_1^*)\hookrightarrow \Hom_\AC(C,A_1^*)$.  (\ref{SI}.) Assume $h: A_1\hookrightarrow B_1$ and suppose $g\in \Hom(LC,LA_1)$ is such that $\hat{h}(g)\in \Hom_\AC(C,B_1)$. It follows that for every $c\in C$ $h(g(c))\in B_1$ so $g(c)\in B_1$ since $h$ is strongly injective so $g\in \Hom_\AC(C,A_1)$ as claimed. By Lemma~\ref{SIDI} we have $h^*:B_1^*\twoheadrightarrow A_1^*$ and by part (\ref{DI}) that $h^{**}: \Hom_\AC(A_1^*,C)\hookrightarrow \Hom_\AC(B_1^*,C)$.  Again by part (\ref{DI}.) our hypothesis on $h$ yields $\Sym^n(h^*): \Sym^n(B_1^*)\twoheadrightarrow \Sym^n(A_1^*)$ and by Lemma~\ref{SIDI} $\Sym^n(h):\Sym^n(A_1^*)^*\hookrightarrow \Sym^n(B_1^*)^*$. 
By Lemma~\ref{SIDI} and part (\ref{DI}.) we have $h^*\otimes t^*: B_1^*\otimes B_2^*\twoheadrightarrow A_1^*\otimes A_2^*$ and applying Lemma~\ref{SIDI} again we conclude $h\otimes t: (A_1^*\otimes A_2^*)^*\hookrightarrow (B_1^*\otimes B_2^*)^*$. The claimed inclusions are immediate.
\end{proof}

The most interesting feature of the previous Theorem is the behavior of tensor products and symmetric powers under strongly injective maps in part (\ref{SI}). The inclusions in the Theorem suggest a sort of failure of ``left exactness'' of tensors and symmetric powers which makes them difficult to compute. The following example shows that the inclusions may be strict even for polyhedra.

\begin{example} Denote $\rr^n$ with canonical basis $a_1,\dots, a_n$ with the symbol $\rr^n_a$ and let $a_1',\dots, a_n'$ be its dual basis. Let $C_x:={\rm Cone}(\pm x_1\pm x_2+x_3) \subseteq \rr^3_x$ and let $\phi_x:C_x\hookrightarrow \rr^4_+$. We know that $C_x\otimes C_y \subseteq (\phi_x\otimes\phi_y)^{-1}(\rr^4_+\otimes \rr^4_+)=(C_x^*\otimes C_y^*)^*$ and we will show that the inclusion is strict.

By Theorem~\ref{morp} part (\ref{SI}.) we know  $\phi_x\otimes \phi_y: (C_x^*\otimes C_y^*)^*\hookrightarrow ((\rr_+^4)^*\otimes (\rr_+^4)^*)^*\subseteq \rr^4_z\otimes \rr^4_w$. Ordering the $z_i\otimes w_j$ of the tensor product lexicographically our morphisms can be represented by the matrices
\[
\phi =
\left(
\begin{array}{ccc}
-1 & 0 & 1\\
1 & 0 & 1\\
0 & -1 & 1\\
0 & 1 & 1\\
\end{array}
\right),
\phi\otimes \phi = \left(
\begin{array}{ccc}
-\phi & 0 & \phi\\
\phi & 0 &\phi\\
0 & -\phi & \phi\\
0 & \phi & \phi\\
\end{array}
\right)
\]
Let $\beta:=-x_1\otimes y_1+x_1\otimes y_2+x_2\otimes y_1+x_2\otimes y_2+x_3 \otimes y_3$ and note that $\beta \in (C_x^*\otimes C_y^*)^*$ because $(\phi\otimes \phi)(\beta)=2(z_2\otimes w_1+z_3 \otimes w_1+z_1\otimes w_2+ z_4\otimes w_2+z_1\otimes w_3+z_3\otimes w_3+z_2\otimes w_4+ z_4\otimes w_4)$ has nonnegative coefficients. We will show that $\beta\not\in C_x\otimes C_y$. To this end let $\gamma:=x_1'\otimes y_1'-x_1'\otimes y_2'-x_2'\otimes y_1'-x_2'\otimes y_2'+2x_3'\otimes y_3'$ and note that $\gamma((\epsilon_1x_1+\epsilon_2x_2+x_3)\otimes (\delta_1y_1+\delta_2y_2+y_3))=\epsilon_1\delta_1-\epsilon_1\delta_2-\epsilon_2\delta_1-\epsilon_2\delta_2+2$ which cannot be negative for $\epsilon_i,\delta_i\in\{-1,1\}$. It follows that $\gamma\in (C_x\otimes C_y)^*$. Since $\gamma(\beta)=-2<0$ we conclude $\beta\not \in C_x \otimes C_y$ as claimed.
\end{example}
\begin{example} Continuing with the previous example, we know that $\Sym^2(C_x)\subseteq \Sym^2(\phi)^{-1}(\Sym^2(\rr_+^4))=\Sym^2(C_x^*)^*$ and we show that the inclusion is strict. Let $\overline{\beta}:=-x_1^2+x_2^2+x_3^2+2x_1x_2$ and note that $\beta\in \Sym^2(C_x^*)^*$ because $\Sym^2(\phi)(\overline{\beta})=2(2z_1z_2+2z_1z_3+2z_2z_4+z_3^2+z_4^2)$ and the latter is an element of $\Sym^2(\rr_+^4)$ which consists exactly of the quadrics with nonnegative coefficients. Letting $\overline{\gamma}:= (x_1')^2-2x_1'x_2'-(x_2')^2+2(x_3')^2$ one verifies as above that $\overline{\gamma}\in \Sym^2(C_x)^*$ and that $\overline\gamma(\overline\beta)<0$ so $\overline{\beta}\not \in \Sym^2(C_x)$ as claimed.
\end{example}

\begin{cor} \label{partial} The following statements hold for marked cones,
\begin{enumerate}
\item{The tensor product (resp. symmetric power) of SDR cones is a projection of tensor products (resp. symmetric powers) of spectrahedral cones.}
\item{The tensor products (resp. symmetric powers) of duals of spectrahedra are projections of the tensor products of PSD cones $S_+(V^*)\otimes S_+(W^*)$ (resp. of symmetric powers $\Sym^n(S_+(V^*))$).}
\item{If $A^*$ and $B$ are spectrahedral cones then $\Hom_\AC(A,B)$ is isomorphic to a linear section of $\Hom_\AC(S_+(V^*),S_+(W^*))$.}
\end{enumerate}
\end{cor}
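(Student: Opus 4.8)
The plan is to reduce all three statements to formal manipulations with the behaviour of $\otimes$ and $\Sym^n$ under dense-image morphisms (Theorem~\ref{morp}(\ref{DI})), the duality dictionary of Lemma~\ref{SIDI}, and the adjunction $\Hom_\AC(A,B)\cong(A\otimes B^*)^*$ extracted from Theorem~\ref{Lemma: Univ}(\ref{Adjoint}). I would work throughout in $\MC$, where a morphism has dense image precisely when it is surjective, so that ``$C\twoheadrightarrow D$'' is literally ``$D$ is a projection of $C$''. Two bookkeeping facts will be used repeatedly. First, the PSD cone is self-dual under the canonical identification $\Sym^2(U)^*\cong\Sym^2(U^*)$, so $S_+(U)^*\cong S_+(U^*)$, an isomorphism which is one of marked cones once $S_+(U)$ is given its standard trace grading and a positive-definite section. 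Second, since every finite-dimensional space is canonically the dual of one, ``$C$ is a spectrahedral cone'' is the same as the existence of a strongly injective $C\hookrightarrow S_+(U)$ for some $U$, with no star; this freedom in placing stars is what lets the PSD cones come out in the form demanded by the statement.

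Statement (1) is immediate: if $D_1,D_2$ are SDR cones, pick spectrahedral $C_i$ and surjections $\pi_i\colon C_i\twoheadrightarrow D_i$; then Theorem~\ref{morp}(\ref{DI}) gives $\pi_1\otimes\pi_2\colon C_1\otimes C_2\twoheadrightarrow D_1\otimes D_2$ and $\Sym^n(\pi_1)\colon\Sym^n(C_1)\twoheadrightarrow\Sym^n(D_1)$, which is exactly the claim. For statement (2), if $A,B$ are spectrahedral, fix $\psi_A\colon A\hookrightarrow S_+(V^*)$ and $\psi_B\colon B\hookrightarrow S_+(W^*)$; by Lemma~\ref{SIDI} the transposes are surjections $\psi_A^*\colon S_+(V^*)^*\twoheadrightarrow A^*$ and $\psi_B^*\colon S_+(W^*)^*\twoheadrightarrow B^*$, and $S_+(V^*)^*\cong S_+(V)$, $S_+(W^*)^*\cong S_+(W)$. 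Applying Theorem~\ref{morp}(\ref{DI}) to these produces $S_+(V)\otimes S_+(W)\twoheadrightarrow A^*\otimes B^*$ and $\Sym^n(S_+(V))\twoheadrightarrow\Sym^n(A^*)$; renaming the ambient spaces exhibits these PSD cones in the required form $S_+(V^*)\otimes S_+(W^*)$ and $\Sym^n(S_+(V^*))$.

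Statement (3) takes a little care. I would start from $\Hom_\AC(A,B)\cong(A\otimes B^*)^*$ and, symmetrically, $\Hom_\AC(S_+(V^*),S_+(W^*))\cong(S_+(V^*)\otimes S_+(W^*)^*)^*\cong(S_+(V^*)\otimes S_+(W))^*$. Hence it suffices to produce a surjection $S_+(V^*)\otimes S_+(W)\twoheadrightarrow A\otimes B^*$ for suitable $V,W$: dualising it via Lemma~\ref{SIDI} turns it into a strongly injective map $(A\otimes B^*)^*\hookrightarrow(S_+(V^*)\otimes S_+(W))^*$, i.e. realises $\Hom_\AC(A,B)$ as the intersection of $\Hom_\AC(S_+(V^*),S_+(W^*))$ with a linear subspace. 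To build the surjection I would use the two hypotheses one at a time. Since $B$ is spectrahedral, $B\hookrightarrow S_+(W^*)$, so Lemma~\ref{SIDI} gives a surjection $S_+(W^*)^*\twoheadrightarrow B^*$, and $S_+(W^*)^*\cong S_+(W)$. Since $A^*$ is spectrahedral, by the star-free reformulation above $A^*\hookrightarrow S_+(V)$ for some $V$, so Lemma~\ref{SIDI} together with the biduality $A^{**}=A$ gives a surjection $S_+(V)^*\twoheadrightarrow A$, and $S_+(V)^*\cong S_+(V^*)$. Tensoring the two surjections $S_+(V^*)\twoheadrightarrow A$ and $S_+(W)\twoheadrightarrow B^*$ via Theorem~\ref{morp}(\ref{DI}) yields $S_+(V^*)\otimes S_+(W)\twoheadrightarrow A\otimes B^*$, and dualising completes the argument.

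The one genuinely delicate point is the duality bookkeeping in (3): the spectrahedral presentations of $A^*$ and of $B$ must be taken with their $(-)^*$'s placed so that, after transposing and applying PSD self-duality, the two PSD cones emerge respectively as the ``inside'' and as the $(-)^*$ of the two factors of $\Hom_\AC(S_+(V^*),S_+(W^*))$ --- getting this wrong produces $\Hom_\AC(S_+(V),S_+(W^*))$ instead. A secondary, routine, point is to confirm that everything takes place in $\MC$: the transposes and the self-duality $S_+(U)^*\cong S_+(U^*)$ respect the gradings and sections because duals in $\MC$ are defined by dualising the grading and section, and the PSD cones carry their standard marked structure. Apart from this, the proof is a direct assembly of the already-established behaviour of $\otimes$, $\Sym^n$ and duality under strongly injective and dense-image morphisms.
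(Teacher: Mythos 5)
Your argument is correct and is exactly the derivation the paper intends: the corollary is stated without proof as an immediate consequence of Theorem~\ref{morp}(\ref{DI}), Lemma~\ref{SIDI}, the adjunction $\Hom_\AC(A,B)\cong(A\otimes B^*)^*$, and the self-duality $S_+(U)^*\cong S_+(U^*)$, which is precisely how you assemble it. The only point worth recording explicitly is the one you already flag, namely that a strongly injective morphism realizes its source as the intersection of the target cone with the image subspace, which is what ``linear section'' means in part (3).
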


\noindent
{\bf Open Problem.} Are the Homs, tensor products and symmetric powers of spectrahedral cones SDR sets? As we will show in the next section, in general they are not  spectrahedra or even basic closed semialgebraic sets.

\section{Linearization functors on PSD cones.}\label{Sec: PSD}
Motivated by Corollary~\ref{partial} parts $(2.)$ and $(3.)$ we study the convex algebraic geometry of linearization functors on PSD cones. The results in this section suggest that with very few exceptions these are very complex and very interesting objects which have been studied in various forms in the past. 

Recall that for a vector space $V$ there is a bilinear non-degenerate pairing $\Sym^2(V^*)\times \Sym^2(V)\rightarrow \rr$ which sends $(fg,uv)$ to $\frac{f(u)g(v)+f(v)g(u)}{2}$ allowing us to identify $\Sym^2(V)$ and $\Sym^2(V^*)^*$. The following Lemma gives several interpretations for the images of linearization functors applied to PSD cones.

\begin{lemma} \label{Lemma: ID}Let $V,W$ be real vector spaces and let $n>0$ be an integer. The following canonical identifications hold,
\begin{enumerate}
\item{\label{psdd} $S_+(V)^*=S_+(V^*)$}
\item{\label{Tensor} $S_+(V)\otimes S_+(W)$ is the convex cone over the Segre-Veronese embedding of $\pp(V)\times \pp(W)$ in $\pp(\Sym^2(V)\otimes \Sym^2(W))$. $\Sym^n(S_+(V))$ is the convex cone over the $n$-th veronese re-embedding of the second veronese embedding of $\pp(V)$ in $\Sym^n(\Sym^2(V))$.}
\item{\label{tensorDual} $(S_+(V)\otimes S_+(W))^*=\Hom_\AC(S_+(V),S_+(W^*))=P_{(2,2)}$ where $P_{(2,2)}(V^*,W^*)$ is the set of nonnegative polynomials in $\Sym^2(V^*)\otimes \Sym^2(W^*)$ (i.e. of bi-degree $(2,2)$). More generally the dual of the tensor product of $n$ PSD cones is the set of nonnegative polynomials of degree $(2,\dots,2)$ in $n$ disjoint sets of variables.}
\item{ \label{symDual} $\Sym^n(S_+(V))^*$ is the set of nonnegative polynomials of degree $(2,\dots, 2)$ in $n$ sets of variables of the same size which are invariant under permutations of these sets of variables.}
\end{enumerate}
\end{lemma}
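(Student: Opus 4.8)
The plan is to prove all four identifications by the same overarching strategy: translate each statement, via the nondegenerate pairing $\Sym^2(V^*)\times \Sym^2(V)\to \rr$ and the universal properties established in Theorem~\ref{Lemma: Univ}, into a concrete statement about cones of forms, and then invoke the Krein--Milman theorem together with the classical fact that the extreme rays of $S_+(V)$ are the squares $\ell^2$ of linear forms $\ell\in V$ (equivalently, the rank-one PSD forms). I would prove them in the order (\ref{psdd}), (\ref{Tensor}), (\ref{tensorDual}), (\ref{symDual}), since each later part leans on the earlier ones.

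First, for part (\ref{psdd}): $S_+(V)$ is by definition the cone of sums of squares in $\Sym^2(V)$, which is the closed convex hull of $\{\ell^2:\ell\in V\}$; dually, under the pairing, a form $q\in \Sym^2(V^*)$ lies in $S_+(V)^*$ iff the scalar $\langle q,\ell^2\rangle = q(\ell)\ge 0$ for all $\ell\in V$, i.e. iff $q$ is PSD as a quadratic form on $V$, i.e. iff $q\in S_+(V^*)$. This is the base case and is essentially immediate once the pairing is written out explicitly on rank-one elements. For part (\ref{Tensor}), I would observe that $S_+(V)\otimes S_+(W)=\C\{u\otimes w: u\in S_+(V), w\in S_+(W)\}$, and by Krein--Milman and Theorem~\ref{Faces}(\ref{tensor}) its extreme rays are exactly $\ell^2\otimes m^2$ with $\ell\in V$, $m\in W$; these are precisely the points of the affine cone over the Segre--Veronese variety, the image of $\pp(V)\times\pp(W)$ under $([\ell],[m])\mapsto [\ell^2\otimes m^2]$ in $\pp(\Sym^2(V)\otimes\Sym^2(W))$. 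Since the Segre--Veronese variety is irreducible and projective, its affine cone is closed, so taking the convex hull is harmless and the two cones coincide. The symmetric-power statement is handled identically: the extreme rays of $\Sym^n(S_+(V))$ are the products $(\ell_1^2)\cdots(\ell_n^2)$ by Theorem~\ref{Faces}(\ref{sym}), and specializing $\ell_1=\cdots=\ell_n=\ell$ shows these all lie on the cone over $(\ell^2)^n$, i.e. the $n$-th Veronese re-embedding of the quadratic Veronese of $\pp(V)$ inside $\Sym^n(\Sym^2(V))$; the reverse inclusion is clear from Krein--Milman.

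For part (\ref{tensorDual}), I would chain: $(S_+(V)\otimes S_+(W))^* \cong \Hom_\AC(S_+(V)\otimes S_+(W),\rr) \cong \Hom_\AC\big(S_+(V),\Hom_\AC(S_+(W),\rr)\big) = \Hom_\AC(S_+(V),S_+(W)^*) = \Hom_\AC(S_+(V),S_+(W^*))$, using the adjunction of Theorem~\ref{Lemma: Univ}(\ref{Adjoint}) and part (\ref{psdd}). To identify this with $P_{(2,2)}(V^*,W^*)$, note that an element $F\in\Sym^2(V^*)\otimes\Sym^2(W^*)$ lies in $(S_+(V)\otimes S_+(W))^*$ iff $\langle F,\ell^2\otimes m^2\rangle\ge 0$ for all $\ell\in V,m\in W$ (extreme rays again), and $\langle F,\ell^2\otimes m^2\rangle$ is precisely the value of the bi-quadratic form $F$ at the point $(\ell,m)$; so membership is equivalent to $F$ being a nonnegative bi-quadratic polynomial. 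The $n$-fold version follows by iterating the adjunction. Part (\ref{symDual}) is then the symmetric analogue: by Theorem~\ref{Lemma: Univ}(\ref{U2}), $\Sym^n(S_+(V))^*$ consists of symmetric multilinear maps $C^n\to\rr$, which unwind to forms $F$ of multidegree $(2,\dots,2)$ in $n$ equal-sized blocks of variables, nonnegative on all tuples of squares (hence nonnegative everywhere since every real input factors through sign choices absorbed into the $\ell_i$), and invariant under permuting the $n$ blocks because the defining map $q$ is symmetric.

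The main obstacle I anticipate is not the convex-geometric bookkeeping but making the ``extreme rays $\Rightarrow$ defining inequalities'' step airtight: one must be careful that $S_+(V)\otimes S_+(W)$ (and $\Sym^n(S_+(V))$) is genuinely closed so that its dual is computed by testing against extreme rays alone — this is where the closedness of the affine cone over the projective Segre--Veronese variety (or, alternatively, the closedness argument for tensor products of marked cones given in the proof of the first theorem) is doing real work. A secondary subtlety is the sign issue in part (\ref{symDual}): I should check explicitly that ``nonnegative on tuples of squares of linear forms'' upgrades to ``nonnegative as a polynomial'' in the multidegree-$(2,\dots,2)$ setting, which holds because each variable block appears to even degree, so an arbitrary real evaluation point in each block can be realized as (a scalar multiple of) a square; I would spell this out once and reuse it for (\ref{tensorDual}).
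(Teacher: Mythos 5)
Your proposal is correct and takes essentially the same route as the paper: identify squares with evaluation functionals via the pairing, note that the tensor/symmetric cones are generated by rank-one elements $\ell^2\otimes m^2$, chain the adjunction of Theorem~\ref{Lemma: Univ} for the dual identifications, and use the universal property of symmetric powers for part (\ref{symDual}). The only cosmetic difference is that you route the generation statement through Krein--Milman and Theorem~\ref{Faces} where the paper simply uses bilinearity to expand sums of squares, and you add some (harmless, already-established) closedness remarks.
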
 
\begin{proof} (\ref{psdd}.) The canonical identification above maps a square $u^2\in \Sym^2(V)$ to the element of $\Sym^2(V^*)^*$ given by evaluation at the point $u$. It follows that the elements of $S_+(V)^*$ are those polynomials $p\in \Sym^2(V^*)$ for which $p(u)\geq 0$ for every $u\in V$. Since every nonnegative quadric is a sum of squares this set is precisely $S_+(V^*)$ as claimed. (\ref{tensor}.) The real Segre-Veronese embedding sends $(v,w)\in \pp(V)\times \pp(W)$ to $v^2\otimes w^2\in \pp(\Sym^2(V)\otimes \Sym^2(W))$ and $S_+(V)\otimes S_+(W)$ is the cone generated by the elements of this form. The second claim is identical. (\ref{tensorDual}.) As above we identify $\Sym^2(V)\otimes \Sym^2(W)\cong \Sym^2(V^*)^*\otimes \Sym^2(W^*)^*=(\Sym^2(V^*)\otimes \Sym^2(W^*))^*$. Under this identification an element $v^2\otimes w^2$ goes to the evaluation of polynomials in $\Sym^2(V^*)\otimes \Sym^2(W^*)$ at the point $(v,w)\in V\times W$. It follows that  $(S_+(V)\otimes S_+(W))^*$ is the set of nonnegative polynomials in $\Sym^2(V^*)\otimes \Sym^2(W^*)$ as claimed. By  Lemma~\ref{Lemma: Univ} we have the equality
\[ (S_+(V)\otimes S_+(W))^*=\Hom_\AC(S_+(V)\otimes S_+(W),\rr_+)=\Hom_\AC(S_+(V),S_+(W)^*).\]
(\ref{symDual}.) By the universal property of symmetric powers we can associate to each $\phi\in \Sym^n(S_+(V))^*$ a symmetric multilinear map $T:S_+(V)^n\rightarrow \rr_+$ and in particular a multilinear map and thus, by the universal property of tensor products, an element $t\in (\bigotimes_{i=1}^n S_+(V))^*$.  By the previous paragraph $t$ is a nonnegative polynomial  in $\bigotimes _{i=1}^n\Sym^2(V^*)$ invariant under permutation of the various copies of $\Sym^2(V^*)$ (i.e. of degree $(2,\dots, 2)$, symmetric in $n$ sets of variables of the same size).
\end{proof}

\begin{theorem} Let $V,W$ be vector spaces of dimensions $m$ and $n$ respectively. The following statements hold:
\begin{enumerate}
\item{If $\min(m,n)\leq 2$ then $\Hom(S_+(V),S_+(W))$ and $\Sym^2(S_+(V))$ are SDR sets and $S_+(V)\otimes S_+(W)$ is spectrahedral.}
\item{\label{badTensor} If $m=n=3$ then $S_+(V)\otimes S_+(W)$ has a non-exposed face and in particular is not a spectrahedron.}
\end{enumerate}
\end{theorem}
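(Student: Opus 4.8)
For part (1) the strategy is to dualize via Lemma~\ref{Lemma: ID} and reduce everything to a classical sum-of-squares statement. By Lemma~\ref{Lemma: ID}(\ref{tensorDual}) we have $(S_+(V)\otimes S_+(W))^{*}=P_{(2,2)}(V^{*},W^{*})$, the cone of nonnegative biquadratic forms, and substituting $W^{*}$ for $W$ in the same identity gives $\Hom_{\AC}(S_+(V),S_+(W))=(S_+(V)\otimes S_+(W^{*}))^{*}=P_{(2,2)}(V^{*},W)$, the nonnegative biquadratic forms in $m$ and $n$ variables. The crucial input is that when $\min(m,n)\le 2$ every such form is a sum of squares of bilinear forms; this is classical (it goes back to Calder\'on). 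Writing $\Sigma_{(2,2)}$ for the cone of such sums of squares and $L\colon\Sym^{2}(U)\to\Sym^{2}(V^{*})\otimes\Sym^{2}(W^{*})$, $U:=V^{*}\otimes W^{*}$, for the surjective ``square of a bilinear form'' map, we have $\Sigma_{(2,2)}=L(S_+(U))$. Hence, under $\min(m,n)\le2$, the cone $\Hom_{\AC}(S_+(V),S_+(W))=\Sigma_{(2,2)}=L(S_+(U))$ is a linear image of a PSD cone and therefore an SDR cone, while $S_+(V)\otimes S_+(W)=P_{(2,2)}^{*}=(L^{*})^{-1}(S_+(U^{*}))$ with $L^{*}$ injective, hence a spectrahedral cone (concretely, it is cut out inside $\Sym^{2}(V)\otimes\Sym^{2}(W)$ by the condition of being positive semidefinite on $V^{*}\otimes W^{*}$). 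Finally $\Sym^{2}(S_+(V))$ is the image of $S_+(V)\otimes S_+(V)$ under the surjective multiplication map $\Sym^{2}(V)^{\otimes 2}\to\Sym^{2}(\Sym^{2}(V))$ of $\S\ref{Sec: Functors}$, and when $m\le 2$ the source is spectrahedral by the previous sentence, so $\Sym^{2}(S_+(V))$ is SDR.

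For part (2) I would first reduce ``not a spectrahedron'' to ``has a non-exposed face'': spectrahedral cones, like spectrahedra, are facially exposed (every face is exposed); see e.g.~\cite{Blekherman}. It thus suffices to exhibit a non-exposed face of $C:=S_+(V)\otimes S_+(W)$ when $m=n=3$. By Theorem~\ref{Faces}(\ref{tensor}) the extreme rays of $C$ are exactly the rank-one tensors $\rr_+(v^{2}\otimes w^{2})$, $(v,w)\in V\times W$, so $C$ is the convex cone over the real Segre--Veronese variety $X\subseteq\pp(\Sym^{2}(V)\otimes\Sym^{2}(W))$. Since $C^{*}=P_{(2,2)}$ consists of the biquadratic forms $\phi$, which are precisely the linear functionals on $\Sym^{2}(V)\otimes\Sym^{2}(W)$ (with $v^{2}\otimes w^{2}$ evaluating to $\phi(v,w)$), the exposed faces of $C$ are exactly the cones $C\cap\ker\phi=\C\{v^{2}\otimes w^{2}:\phi(v,w)=0\}$ with $\phi\in P_{(2,2)}$. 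Hence a face $F$ of $C$ is non-exposed precisely when no nonnegative biquadratic form has real zero set equal to the set of extreme rays of $F$; in particular $F$ is non-exposed whenever the linear span of the rank-one tensors generating $F$ contains a further rank-one tensor of $X$ (that tensor then lies in the zero set of every biquadratic form vanishing on $F$).

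It remains to produce such a face. The extreme rays and the two-dimensional faces of $C$ are all exposed: for such a face one writes down a nonnegative (indeed sum-of-squares) biquadratic form vanishing at exactly its one or two extreme rays — e.g.\ $\lvert x\times v_{0}\rvert^{2}\lvert y\rvert^{2}+\lvert x\rvert^{2}\lvert y\times w_{0}\rvert^{2}$ (cross products in $\rr^{3}$) vanishes only at the ray $\rr_+(v_{0}^{2}\otimes w_{0}^{2})$. So the face we seek has dimension at least three, and I would look for it among the proper sub-faces of the exposed face $F_{\phi_{0}}$ cut out by a nonnegative biquadratic form $\phi_{0}$ that is \emph{not} a sum of squares — the Choi form being the classical example, available precisely because $\Sigma_{(2,2)}\subsetneq P_{(2,2)}$ at $m=n=3$, the same threshold as in the theorem. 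The real zero set of $\phi_{0}$ is an explicit finite configuration of rank-one tensors, among which one aims to single out a proper sub-configuration $S$ that (a) generates a genuine face of $F_{\phi_{0}}$, hence of $C$, and (b) whose linear span meets $X$ in a rank-one tensor outside $S$. By the criterion above $\C(S)$ is then a non-exposed face of $C$, and $C$ is not a spectrahedron.

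The main obstacle is this construction. Verifying (a) — that $\C(S)$ is genuinely extremal in all of $C$, and not merely a face of the exposed face $F_{\phi_{0}}$ that gets absorbed into a larger face of $C$ — requires controlling how sums of rank-one Segre--Veronese tensors decompose; and producing the ``extra'' tensor of (b) is a concrete linear-algebra computation in the $36$-dimensional space $\Sym^{2}(V^{*})\otimes\Sym^{2}(W^{*})$, together with a choice of $\phi_{0}$ whose zero configuration sits in the required special position. Pinning down why such a configuration first becomes available when $m=n=3$, and not before, is exactly the place where the gap $\Sigma_{(2,2)}\subsetneq P_{(2,2)}$ has to be used.
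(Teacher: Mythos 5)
Your part (1) is essentially the paper's own argument: the paper's map $\Psi$ is your $L^{*}$, the input is the same theorem of Calder\'on, and the conclusion for $S_+(V)\otimes S_+(W)$ and for $\Hom_{\AC}(S_+(V),S_+(W))$ is reached the same way. Your treatment of $\Sym^{2}(S_+(V))$ as the image of $S_+(V)\otimes S_+(V)$ under the multiplication map is a slightly different (and arguably more direct) route than the paper's, which instead identifies $\Sym^{n}(S_+(V))^{*}$ with symmetric nonnegative multiquadratics and dualizes; both are fine.

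Part (2) is where the proposal fails. Beyond the fact that you do not actually produce the non-exposed face (and the construction \emph{is} the content of the statement), the sufficient criterion you propose is internally inconsistent: for any face $F$ of a convex cone $C$ one has $F=C\cap\operatorname{span}(F)$, so if the linear span of the rank-one tensors generating $\C(S)$ contained a further rank-one tensor of $C$ outside $\C(S)$, then $\C(S)$ would not be a face of $C$ at all --- your conditions (a) and (b) cannot hold simultaneously. The paper's mechanism is different on both counts you emphasize. The witness is an explicit \emph{sum of squares} $r$ of bidegree $(2,2)$ (no Choi-type form, and the gap $\Sigma_{(2,2)}\subsetneq P_{(2,2)}$ is never invoked); its real zero set consists of twenty points whose evaluation functionals are \emph{linearly independent}, which is exactly why dropping one of them yields a genuine face $F$ of the exposed face $C\cap\ker r$, hence of $C$. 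Non-exposedness then comes from the second-order phenomenon your span criterion misses: a nonnegative form vanishes to order at least two at each of its real zeros, and a Macaulay2 computation shows that every $(2,2)$-form vanishing doubly at the remaining nineteen points automatically vanishes at the twentieth. So no nonnegative form can cut out $F$ exactly, even though the twenty evaluation functionals span a $20$-dimensional space. Any repair of your argument has to replace the condition ``the span of the points contains an extra point of $X$'' by ``the span of the second-order (tangential) conditions at the points forces vanishing at an extra point of $X$,'' which is precisely the computation the paper performs.
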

\begin{proof} The map $\Psi: \Sym^2(V)\otimes \Sym^2(W)\rightarrow (\Sym^2(V^*\otimes W^*))^*$ sending an element $\phi$ to the bilinear symmetric form $(v_1^*\otimes w_1^*, v_2^*\otimes w_2^*)\rightarrow \phi(v_1^*v_2^*\otimes w_1^*w_2^*)$ satisfies $\Psi(\phi)\succeq 0$ iff $\phi$ is nonnegative on squares in $\Sym^2(V^*)\otimes \Sym^2(W^*)$. As a result the spectrahedral cone $\Psi^{-1}S_+((V^*\otimes W^*)^*)$ is dual to the cone of sums of squares in $\Sym^2(V^*)\otimes \Sym^2(W^*)$. By~\cite[Theorem 1]{Calderon} every nonnegative biquadratic form in $(m,n)$ variables is a sum of squares iff $\min(m,n)\leq 2$. Thus, if $\min(m,n)\leq 2$ then the spectrahedron $\Psi^{-1}S_+((V^*\otimes W^*)^*)$ is dual to the nonnegative polynomials in $\Sym^2(V^*)\otimes \Sym^2(W^*)$ and thus by Lemma~\ref{Lemma: ID} part (\ref{tensorDual}.) coincides with $S_+(V)\otimes S_+(W)$ as claimed. Dualizing $\Psi$ we conclude that $P_{(2,2)}$ is an SDR cone. By Lemma~\ref{Lemma: ID} part (\ref{symDual}.) $\Sym^n(S_+(V))$ is SDR since it is the dual of the intersection of an SDR set and a linear subspace. (\ref{badTensor}.) By Lemma~\ref{Lemma: ID} part (\ref{tensorDual}) we can identify $C:=S_+(V)\otimes S_+(W)$ with the space of nonnegative polynomials in $\rr[V_0,\dots,V_3,W_0,\dots, W_3]$ of degree $(2,2)$ (where $\deg(V_i)=(1,0)$ and $\deg(W_i)=(0,1)$). Via this identification the element $v^2\otimes w^2$ acts on polynomials of degree $(2,2)$ via evaluation at the point $(v,w)$. We denote this evaluation operator as $\ell_{(v,w)}$. Let 
\[ r= \left(\sum_{i=0}^4(i+1) X_i\right)^2\left(\sum_{i=0}^4(i+1) Y_i\right)^2+ \left(\sum_{i=0}^4Y_i\right)^2\left(\sum_{i=0}^4X_i\right)^2+\sum_{i=0}^3(X_iY_i)^2\]
and note that this is a nonnegative polynomial of degree $(2,2)$. The face of $C$ defined by $r$ is the cone spanned by the evaluations at the twenty real zeroes of $r$ in $\pp^3\times \pp^3$:
\begin{table}
\centering
\begin{tiny}
\begin{tabular}{|c|c|c|c|}
\hline
{[}0,0,0,1][1, -2, 1, 0] & [0, 0, 1, 0][2, -3, 0, 1] & [0, 1, 0, 0][1, 0, -3, 2] & [2, -1, 0, 0][0, 0, 1, -1]\\
{[}1,0,0,0][0, 1, -2, 1] & [0, 0, 1,-1][2, -1, 0, 0] & [0, -1, 0, 1][3, 0, -1, 0]& [0, -1, 2, -1][1, 0, 0,0]\\
{[}-1, 0, 0, 1][0, -3, 2, 0] & [0, -1, 1, 0][4, 0, 0, -1] & [-1, 0, 1, 0][0, -2, 0, 1] & [1, 0, -3, 2][0, 1, 0, 0]\\
{[}1, -1, 0, 0][0, 0, -4, 3] & [0, 0, -4, 3][1, -1, 0, 0] & [0, -2, 0, 1][1, 0, -1, 0] & [2, -3, 0, 1][0, 0, 1, 0]\\
{[}4, 0, 0, -1], [0, 1,-1, 0] & [0, -3, 2, 0][1, 0, 0, -1] & [3, 0, -1, 0][0, 1, 0, -1] & [1, -2, 1, 0][0, 0, 0, 1]\\
\hline
\end{tabular}
\end{tiny}
\end{table}

Let $Z=\{p_1,\dots, p_{20}\}$ be this set of points. We will show that the set $F={\rm Cone}(\ell_p:p\in Z\setminus \{p_1\})$ is a non-exposed face of $C$. To see that it is a face suppose $\sum \lambda_i \ell _{v_i}\in F$ with $\lambda_i>0$. It follows that $\ell_{v_i}(r)=0$ for every $i$ and thus that, up to nonnegative scaling $\ell_{v_i}= \ell_{p}$ for some $p\in Z$. Moreover $\ell_{p_1}$ cannot appear with nonzero coefficient on the left hand side since otherwise we would contradict the fact that the evaluation maps $\ell_{p_i}$ are linearly independent. To prove that $F$ is not exposed suppose the contrary. Then there exists a nonnegative polynomial $q$ of degree $(2,2)$ whose real zeroes are precisely the points of $Z\setminus \{p_1\}$. Recall that every nonnegative polynomial vanishes to order at least two at any of its zeroes and thus the existence of $q$ implies the existence of a polynomial of degree $(2,2)$ which vanishes doubly through the points of $Z\setminus \{p_1\}$ and does not vanish at $p_1$. We will show that this is impossible. To this end, let $I$ be the ideal of definition of the points in $Z$  the homogeneous coordinate ring of $\pp^3\times \pp^3$ that is,
\[ I=\sum_{i=0}^4(X_iY_i)+\left(\sum_{i=0}^4(i+1) X_i\sum_{i=0}^4(i+1) Y_i\right)+ \left(\sum_{i=0}^4Y_i \sum_{i=0}^4X_i\right)\]
and let $J$ be the ideal defined by the squares of the defining ideals of all points except the first. A calculation in the computer program Macaulay2~\cite{M2} shows that the dimensions in degree $(2,2)$ of the saturations of $J$ and $I^2$ with respect to the irrelevant ideal are both $21$ so every homogeneous polynomial of degree $(2,2)$ double-vanishing at the points of $Z\setminus \{p_1\}$ vanishes also at $p_1$, proving that $q$ does not exist. It follows that $F$ is a non-exposed face of $C$ and thus that $C$ is not a spectrahedron since every face of a spectrahedron is exposed~\cite{RG}.
\end{proof}

\begin{remark} The previous Theorem classifies all $V,W$ for which $S_+(V)\otimes S_+(W)$ is a spectrahedron. As a consequence we see that it is not true in general that the tensor product of spectrahedra is a spectrahedron. \end{remark}

Next we study the geometry of the cone $\Hom(S_+(V),S_+(W))$ following closely the ideas of Nie~\cite{Nie}. We provide a self-contained proof for the reader's benefit. Recall~\cite[Chapter I]{GKZ} that if $X\subseteq \pp(V)$ is an irreducible projective variety then a hyperplane $H\in \pp(V^*)$ is tangent to $X$ if there exists a smooth point $x\in X$ such that the tangent space to $x$ at $H$ contains the tangent space to $x$ at $X$ (equivalently such that $H$ vanishes to order at least two at $x$). Let $X^{\vee}$ denote the closure in $\pp(V^*)$ of the set of hyperplanes tangent to $X$. It is well known that $X^{\vee}$ is irreducible and in most cases a hypersurface. In this case any of its defining equations is called an $X$-discriminant. 

\begin{theorem} Let $V,W$ be vector spaces of dimensions $m,n\geq 2$. The following statements hold:
\begin{enumerate}
\item{\label{bdry} The algebraic boundary of $\Hom(S_+(V),S_+(W^*))$ (i.e. the Zariski closure of the boundary) is the irreducible hypersurface dual to $\pp(V)\times \pp(W)$ in its Segre-Veronese embedding in $\pp(\Sym^2(V)\otimes \Sym^2(W))$.}
\item{\label{intBd}The strictly positive polynomial $r:=\left(x_0^2+\dots +x_m^2\right)\left(y_0^2+\dots +y_n^2\right)$ belongs to interior of $\Hom(S_+(V),S_+(W^*))$ and to its algebraic boundary.}
\item{\label{consequences} The cone $\Hom(S_+(V),S_+(W^*))$ is not a basic closed semialgebraic set and in particular not a spectrahedron.}
\end{enumerate}
\end{theorem}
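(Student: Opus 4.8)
The plan is to identify $K:=\Hom_\AC(S_+(V),S_+(W^{*}))$ with a dual cone over an explicit projective variety and then run the discriminant argument of Nie~\cite{Nie}. By Lemma~\ref{Lemma: ID} part~(\ref{tensorDual}.) we have $K=(S_+(V)\otimes S_+(W))^{*}$, the dual of the cone $\C(X)$ over the Segre-Veronese variety $X\subseteq\pp(\Sym^2(V)\otimes\Sym^2(W))$, the image of $\pp(V)\times\pp(W)$ under $(v,w)\mapsto v^2\otimes w^2$; equivalently $K$ is the cone of nonnegative biquadratic forms in $U:=\Sym^2(V^{*})\otimes\Sym^2(W^{*})$. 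Since squares span $\Sym^2$, the smooth irreducible variety $X$ spans $\pp(U^{*})$, so $\C(X)$ and $K$ are pointed closed full-dimensional cones. For part~(\ref{bdry}.) I would first show that the Zariski closure of $\partial K$ is contained in $X^{\vee}$: a topological boundary point $\ell$ of $K=\C(X)^{*}$ is nonnegative on $\C(X)$ but not strictly positive on $\C(X)\setminus\{0\}$, so $\ell(v^2\otimes w^2)=0$ for some nonzero generator; thus the nonnegative biquadratic form $\ell$ vanishes at the real point $[v]\times[w]\in\pp(V)\times\pp(W)$, hence vanishes there to order at least two, so the hyperplane $\{\ell=0\}$ is tangent to $X$ and $\ell\in X^{\vee}$. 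To promote this to an equality I would use two classical facts: (i) because $X$ is the re-embedding of $\pp(V)\times\pp(W)$ by the tensor square of the very ample bundle $\Osh(1,1)$, its dual variety $X^{\vee}\subseteq\pp(U)$ is an \emph{irreducible hypersurface} (\cite[Chapter~1]{GKZ}); and (ii) $\partial K$ is a semialgebraic cone of real dimension $\dim_\rr U-1$, so its Zariski closure has complex dimension $\dim_\rr U-2=\dim X^{\vee}$. An irreducible variety cannot properly contain a closed subvariety of its own dimension, so the Zariski closure of $\partial K$ equals $X^{\vee}$, the hypersurface dual to the Segre-Veronese embedding of $\pp(V)\times\pp(W)$; this is part~(\ref{bdry}.).

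For part~(\ref{intBd}.): the biquadratic form $r=(x_0^2+\dots+x_m^2)(y_0^2+\dots+y_n^2)$ is strictly positive at every real point of $\pp(V)\times\pp(W)$, hence strictly positive on every nonzero generator $v^2\otimes w^2$ of $\C(X)$, hence on all of $\C(X)\setminus\{0\}$ (each nonzero element of the cone is a nonnegative combination of such generators with some positive coefficient); therefore $r\in{\rm int}(\C(X)^{*})={\rm int}(K)$. On the other hand, writing $r=r_1r_2$ with $r_1=\sum x_i^2$, $r_2=\sum y_j^2$ viewed on $\pp(V)\times\pp(W)$, the product rule gives $d(r_1r_2)=r_2\,dr_1+r_1\,dr_2$, which vanishes along $V(r_1)\cap V(r_2)$; this locus is nonempty since $m,n\geq2$, so the zero hypersurface of $r$ in $\pp(V)\times\pp(W)$ is singular, which is precisely the condition $r\in X^{\vee}$. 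By part~(\ref{bdry}.), $r$ thus lies on the algebraic boundary of $K$ while lying in its interior.

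For part~(\ref{consequences}.): suppose for contradiction that $K=\{x:g_1(x)\geq0,\dots,g_k(x)\geq0\}$ is basic closed semialgebraic. Dividing each $g_i$ by its largest perfect-square polynomial factor alters the set $\{g_i\geq0\}$ only along a proper subvariety and hence, since $K$ is closed and full-dimensional, does not change $K$; so we may assume each $g_i$ is squarefree. Since $\partial K\subseteq\bigcup_iV(g_i)$, the Zariski closure $X^{\vee}$ of $\partial K$ lies in $\bigcup_iV(g_i)$, and irreducibility forces the defining polynomial $f$ of $X^{\vee}$ to divide some $g_{i_0}$, say $g_{i_0}=fh$ with $h$ coprime to $f$. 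From $K$ being basic closed one then deduces, as in Nie, that $K$ is the closure of a single connected component of $\{\varepsilon f>0\}$ for a fixed sign $\varepsilon$; in particular $\varepsilon f\geq0$ on a whole ball around the interior point $r$ of part~(\ref{intBd}.), so $f$ does \emph{not} change sign near $r$. The contradiction I would extract is that $f$ \emph{does} change sign near $r$: following Nie's local analysis of the discriminant, the defining polynomial $f$ of $X^{\vee}$ is not a one-sided local extremum at $r$, so every neighbourhood of $r$ contains points with $f>0$ and points with $f<0$. Choosing $r$ (or a nearby point of ${\rm int}(K)\cap X^{\vee}$) off the proper subvariety $V(f)\cap V(h)$, so that $h$ has a constant nonzero sign there, one concludes that $g_{i_0}=fh$ takes both signs near that point, contradicting $g_{i_0}\geq0$ on ${\rm int}(K)$. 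Hence $K$ is not basic closed semialgebraic, and since every spectrahedron $\{A+\sum x_iB_i\succeq0\}$ is cut out by the nonnegativity of the principal minors of its defining pencil and is therefore basic closed semialgebraic, $K=\Hom(S_+(V),S_+(W^{*}))$ is not a spectrahedron.

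The main obstacle is the last point of part~(\ref{consequences}.): controlling the \emph{real} local geometry of the discriminant hypersurface $X^{\vee}$ at $r=(\sum x_i^2)(\sum y_j^2)$. Since $r$ corresponds to a reducible --- hence highly degenerate --- biquadratic form, it sits on a deep singular stratum of $X^{\vee}$, and one must nevertheless rule out that $f$ attains a one-sided extremum at $r$, i.e. show that $f$ changes sign in every neighbourhood of $r$; this infinitesimal input is exactly what Nie~\cite{Nie} supplies. By contrast the identification of the algebraic boundary in part~(\ref{bdry}.) --- a short convexity argument together with the fact that the dual of a tensor-square re-embedding of a smooth variety is an irreducible hypersurface --- and the placement of $r$ in part~(\ref{intBd}.) are comparatively soft.
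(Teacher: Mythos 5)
Your proposal follows the same route as the paper in all three parts: the identification $\Hom_\AC(S_+(V),S_+(W^*))=(S_+(V)\otimes S_+(W))^*=P_{(2,2)}$, the observation that real zeros of nonnegative forms are critical points so that the Zariski closure of the boundary lies in $X^{\vee}$, the GKZ irreducibility of the dual hypersurface together with a codimension count to upgrade this to equality, and for part (2) the exhibition of a point where $r$ vanishes to order two. One small caution on part (2): the double zero is necessarily a \emph{complex} point of $\pp(V)\times\pp(W)$ (the paper writes it out explicitly as $(1,i,0,\dots,0)\times(1,i,0,\dots,0)$); your locus $V(r_1)\cap V(r_2)$ has empty real locus, so you should say explicitly that you are testing tangency at complex points of $X$ --- that is the whole point of the example, since $r$ has no real zeros. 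The only genuine divergence is part (3). The paper's proof is one line: some $g_{i_0}$ must be divisible by the $Z$-discriminant, hence vanishes at the interior point $r$, ``which is a contradiction.'' You correctly sensed that this is not by itself a contradiction --- a defining inequality of a basic closed set can vanish at an interior point, e.g. $\{x\in\rr: x(x-1)^2\geq 0\}=[0,\infty)$ --- and so you supplemented the divisibility step with a sign-change argument in the style of Nie. Your instinct is right and your version is the more complete of the two; the price is that the key local statement (that $g_{i_0}$, having the discriminant as a factor of odd multiplicity after the squarefree reduction, must change sign near $r$ or near a nearby point of ${\rm int}(K)\cap V(f)$ where the cofactor does not vanish) is left as an unproved import from \cite{Nie}, and $r$ sits on a singular stratum of $X^{\vee}$ where this is genuinely delicate. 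So as written neither your part (3) nor the paper's is self-contained on this point, but yours at least isolates exactly what remains to be checked.
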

\begin{proof} (\ref{bdry}.) As above we can identify $\Hom(S_+(V),S_+(W^*))$ with the linear functionals in $\Sym^2(V^*)\otimes \Sym^2(W^*)$ which are nonnegative in the image $Z$ of $\pp(V)\times \pp(W)$ in its Segre-Veronese embedding in $\pp(\Sym^2(V)\otimes \Sym^2(W))$. Since every zero of a nonnegative polynomial in $\pp(V)\times \pp(W)$ must be a critical point (i.e. in local coordinates the gradient of the polynomial must be zero) it follows that all nonnegative polynomials with real zeroes correspond to linear forms in $Z^{\vee}$. By~\cite[Corollary 5.11]{GKZ} we know that $Z^{\vee}$ is an irreducible hypersurface. Since the boundary of  $\Hom(S_+(V),S_+(W^*))$ has real codimension one in $\Sym^2(V^*)\otimes \Sym^2(W^*)$ it follows that $Z^{\vee}$ is the algebraic closure of the boundary as claimed. (\ref{intBd}.) The polynomial obviously has no real zero in $\pp(V)\times \pp(W)$ and thus lies in the interior of the cone. In the affine chart where $x_0y_0\neq 0$ we have $\frac{\partial r}{\partial x_i}=2x_i(1+y_1^2+\dots+y_n^2)$ and $\frac{\partial r}{\partial y_i}=2y_i(1+x_1^2+\dots+x_n^2)$ and thus the polynomial vanishes to order at least two at the point $(1,i,0\dots,0)\times (1,i,0,\dots ,0)$ and thus lies in $Z^{\vee}$. (\ref{consequences}.)
If $\Hom(S_+(V),S_+(W^*))$ was defined by finitely many real polynomials $g_i\geq 0$ then one of them would have to be divisible by the $Z$-discriminant and in particular would have to vanish at an interior point of the cone which is a contradiction. 
\end{proof}

\begin{remark} Recall that a barrier function for a cone $C$ is a real-valued continuous function in the interior of $C$ which approaches $\infty$ at all points of the boundary of $C$. Log-polynomial barrier functions (i.e. those which are of the form $\log(\eta)$ for a polynomial $\eta$) are a key tool of interior point optimization algorithms~\cite{NN}. As observed by Nie in~\cite{Nie}, theorems as above imply the non-existence of a log-polynomial barrier for $\Hom(S_+(V),S_+(W^*))$. Because, if $\phi$ was a log-polynomial barrier function then $e^{\phi}$ would be an algebraic function vanishing in the boundary and thus divisible by the Z-discriminant contradicting the continuity of the barrier $\phi$ in the interior.
\end{remark}

\section{Schur functors on compact convex sets.}\label{Schur functors}
Having considered tensor product and symmetric power operations on convex sets a natural next step is to study the behavior of general Schur functors (see~\cite[Section 8.1]{F} for definitions).

Let $n, m$ be natural numbers and let $V$ be an $m$-dimensional vector space. For a partition $\lambda$ of the integer $n$ let $h_{\lambda}: V^{\oplus n}\rightarrow \mathbb{S}_{\lambda}(V)$ be the universal multilinear map which satisfies the $\lambda$-exchange axioms (see~\cite[pg. 105]{F} for a definition) sending $(v_1,\dots, v_n)$ to the class of the tableau filled with the elements $v_i$ (in order, left to right and top to bottom).
\begin{definition} Let $P\subseteq V$ be a convex body containing $0$. We define $\mathbb{S}_{\lambda}(P):={\rm Conv}\big(h_{\lambda}(P\times P\times \dots \times P)\big)\subseteq \mathbb{S}_{\lambda}(V)$. For a morphism $T:P\rightarrow Q$ of  compact convex sets let $\mathbb{S}_{\lambda}(T)$ be as in the category of vector spaces.
\end{definition}
\begin{remark} It is possible to define Schur functors in $\AC$ as we did with tensors and symmetric powers. However, if the tableaux of shape $\lambda$ has more than one row then the resulting cones are linear subspaces and thus uninteresting from the point of view of convex geometry. 
\end{remark}
Nevertheless, Schur functors are linearization functors on convex bodies (or equivalently in the category $\MC$) leading to highly symmetric convex sets. The following Theorem summarizes some of their fundamental properties,
\begin{theorem} The following statements hold
\begin{enumerate}
\item{ \label{funct} $\mathbb{S}_{\lambda}$ is a functor in the category of  compact convex sets containing the origin.}
\item{ \label{linearization} Let $W$ be any compact convex set and let $T: P ^{\oplus n}\rightarrow W$ be any multilinear map satisfying the $\lambda$-exchange axioms. There is a unique morphism of compact convex sets $t: \mathbb{S}_{\lambda}(P)\rightarrow W$ satisfying $t\circ h_{\lambda}=T$.}
\item{ \label{sim}  $\mathbb{S}_{\lambda}(P)$ is a convex body with dimension equal to the number of semistandard tableaux of shape $\lambda$ with entries in $\{1,\dots, m\}$. If $\lambda$ has more than one row then $\mathbb{S}_{\lambda}(P)$ is symmetric around the origin.}
\item{\label{extreme} If $E$ is the set of extreme points of $P$ then $\mathbb{S}_{\lambda}(P)$ equals the convex hull of $h_\lambda(E^{\oplus n})$. In particular, if $P$ is a polytope then $\mathbb{S}_{\lambda}(P)$ is a polytope.}
\item{ \label{approx} If $m_P$ is a measure supported on $P$ (for instance the restriction of the Lebesgue measure in $V$) then $m_P$ determines an accurate approximation scheme for $\mathbb{S}_{\lambda}(P)$ by projections of spectrahedra.}
\end{enumerate}
\end{theorem}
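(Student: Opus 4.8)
The plan is to reduce each assertion to the corresponding statement about Schur functors on vector spaces (see \cite{F}), combined with the convex-hull machinery already developed in this paper: multilinearity of $h_\lambda$, the Krein--Milman and Carath\'eodory theorems, and Lemma~\ref{Approx}. Parts (\ref{funct}) and (\ref{linearization}) come by restriction. For (\ref{funct}): if a linear map $T\colon V\to V'$ satisfies $T(P)\subseteq Q$, then naturality of $h_\lambda$ gives $\mathbb{S}_\lambda(T)\circ h_\lambda=h_\lambda\circ T^{\oplus n}$, so $\mathbb{S}_\lambda(T)$ maps $h_\lambda(P^{\oplus n})$ into $\mathbb{S}_\lambda(Q)$ and, being linear, maps the convex hull $\mathbb{S}_\lambda(P)$ into the convex set $\mathbb{S}_\lambda(Q)$; compatibility with composition and identities is inherited from the vector-space functor. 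For (\ref{linearization}): a multilinear $T\colon P^{\oplus n}\to W$ obeying the $\lambda$-exchange axioms is by definition the restriction of a multilinear $\widetilde T\colon V^{\oplus n}\to LW$, and since $P^{\oplus n}$ is full-dimensional, hence Zariski dense, in $V^{\oplus n}$, the exchange identities hold for $\widetilde T$ everywhere; the vector-space universal property of $\mathbb{S}_\lambda(V)$ then furnishes a unique linear $\widetilde t\colon \mathbb{S}_\lambda(V)\to LW$ with $\widetilde T=\widetilde t\circ h_\lambda$, whose restriction $t$ maps $\mathbb{S}_\lambda(P)$ into $W$ because $W$ is convex and $\widetilde t(h_\lambda(P^{\oplus n}))=\widetilde T(P^{\oplus n})\subseteq W$. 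Uniqueness of $t$ follows from the fact, proved next, that $h_\lambda(P^{\oplus n})$ spans $\mathbb{S}_\lambda(V)$.

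For (\ref{sim}), note first that $\mathbb{S}_\lambda(P)$ is the convex hull of the continuous (indeed polynomial) image $h_\lambda(P^{\oplus n})$ of a compact set, hence compact by Carath\'eodory, and it contains $0$ since $0\in P$ and $h_\lambda$ annihilates any tuple with a zero entry. It is full-dimensional: $P$, being full-dimensional and containing the origin, linearly spans $V$, so one may choose a basis $v_1,\dots,v_m\in P$, and filling the semistandard tableaux of shape $\lambda$ with these vectors produces the standard basis of $\mathbb{S}_\lambda(V)$ \cite{F}; all of its elements lie in $h_\lambda(P^{\oplus n})\subseteq\mathbb{S}_\lambda(P)$, so $\dim\mathbb{S}_\lambda(P)=\dim\mathbb{S}_\lambda(V)$, which is the number of semistandard tableaux of shape $\lambda$ with entries in $\{1,\dots,m\}$. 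If $\lambda$ has more than one row, its first column has length at least two, and the column-alternating relations encoded in the $\lambda$-exchange presentation of $\mathbb{S}_\lambda$ yield $h_\lambda\circ\tau=-h_\lambda$, where $\tau\colon V^{\oplus n}\to V^{\oplus n}$ transposes the coordinates occupying the top two boxes of the first column (namely positions $1$ and $\lambda_1+1$ in the reading-order filling). As $\tau$ only permutes coordinates it preserves $P^{\oplus n}$, so $h_\lambda(P^{\oplus n})=h_\lambda(\tau(P^{\oplus n}))=-h_\lambda(P^{\oplus n})$; taking convex hulls shows $\mathbb{S}_\lambda(P)=-\mathbb{S}_\lambda(P)$.

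Parts (\ref{extreme}) and (\ref{approx}) are then immediate. Writing $E={\rm ext}(P)$, Krein--Milman gives $P={\rm Conv}(E)$; expanding $h_\lambda(p_1,\dots,p_n)$ by multilinearity after writing each $p_i$ as a convex combination of points of $E$ exhibits it as a convex combination of points of $h_\lambda(E^{\oplus n})$, since the resulting products of coefficients are nonnegative and sum to $1$. Hence $h_\lambda(P^{\oplus n})\subseteq{\rm Conv}(h_\lambda(E^{\oplus n}))$, and the opposite inclusion is trivial, so $\mathbb{S}_\lambda(P)={\rm Conv}(h_\lambda(E^{\oplus n}))$; when $P$ is a polytope $E$ is finite, so $\mathbb{S}_\lambda(P)$ is a polytope. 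For (\ref{approx}) we apply Lemma~\ref{Approx} with the compact set $B=P^{\oplus n}$, the finite product measure $m_P^{\otimes n}$ supported on it, and the polynomial map $h_\lambda$; since ${\rm Conv}(h_\lambda(B))=\mathbb{S}_\lambda(P)$, this is precisely an accurate approximation scheme by projections of spectrahedra, exactly as in the proof of the analogous statement for tensor and symmetric powers (when $m_P$ is the restriction of Lebesgue measure to $P$, full-dimensionality of $P$ ensures the hypotheses of Lemma~\ref{Approx}).

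The step I expect to be the crux is the symmetry assertion in (\ref{sim}): one must extract from the $\lambda$-exchange description of $\mathbb{S}_\lambda(V)$ that it is skew-symmetric in the entries of each column, and then realize the resulting odd symmetry by an honest coordinate transposition of $V^{\oplus n}$ that manifestly fixes $P^{\oplus n}$. Everything else is bookkeeping with the vector-space theory of Schur functors and the convex-geometric tools already established.
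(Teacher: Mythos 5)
Your proposal is correct and follows essentially the same route as the paper's proof: reduction to the vector-space theory of Schur functors for parts (\ref{funct}) and (\ref{linearization}), the column-exchange sign relation realized by a coordinate transposition preserving $P^{\oplus n}$ for the symmetry in (\ref{sim}), Krein--Milman plus multilinearity of $h_\lambda$ for (\ref{extreme}), and Lemma~\ref{Approx} applied to the product measure on $P^{\oplus n}$ and the polynomial map $h_\lambda$ for (\ref{approx}). You supply more detail than the paper at several points (Zariski density for extending the exchange identities, the explicit basis of semistandard tableaux for the dimension count), but the underlying arguments are the same.
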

\begin{proof} (\ref{funct}.) It suffices to verify that if $f: P_1\rightarrow P_2$ is a morphism then $\mathbb{S}_{\lambda}(f)$ maps $\mathbb{S}_{\lambda}(P_1)$ to $\mathbb{S}_{\lambda}(P_2)$. Now, $\mathbb{S}_{\lambda}(f)$ maps the class of a tableau $[v_1,\dots, v_n]$ of shape $\lambda$ with entries in $P_1$ to the class of the tableau $[f(v_1),\dots, f(v_n)]$ which is an element of $\mathbb{S}_{\lambda}(P_2)$ as claimed since $f(v_i)\in P_2$ for every $i$. (\ref{linearization}.) Follows from the analogous universal property for Schur functors on vector spaces.(\ref{sim}.) The statement about dimension follows from the fact that $\mathbb{S}_{\lambda}(P)$ is full-dimensional because $P$ contains a basis for $V$. If $\lambda$ has more than one row then the set of classes of tableaux $(v_1,\dots, v_n)$ with $v_i\in P$ is closed under multiplication by $(-1)$ because the negative of a tableau can be obtained by exchanging two entries in the same column. Since this property extends to the convex closure the claim follows. (\ref{extreme}.) Since $E\subseteq P$ then $h(E^{\oplus n})\subseteq h(P^{\oplus n})$ and thus the convex hull of the left hand side is included in $\mathbb{S}_{\lambda}(P)$. By the Krein-Milman theorem every point of $P$ is a convex combination of extreme points. Since $h_\lambda$ is multilinear it follows that every element of $h_\lambda(P^{\oplus n})$ is a convex combination of elements in $h(E^{\oplus n})$ and thus equality holds. (\ref{approx}.) The product measure $m^{n}$ is supported on $P^{\oplus n}$ and the map $h_{\lambda}: V^{\oplus n}\rightarrow \mathbb{S}_{\lambda}(V)$ is a polynomial map. Lemma~\ref{Approx} gives an accurate approximation scheme for $\mathbb{S}_{\lambda}(P)$ via projections of spectrahedra.\end{proof}

\begin{example} Let $m=3$ and let $P$ be the cube with vertices $(\pm1,\pm1,\pm1)$. Let $n=2$ and let $\lambda=1+1$. Then $\mathbb{S}_{\lambda}(P)=\bigwedge^2(P)$ is the polytope shown in the figure. Note that this polytope has only $12$ vertices and thus, in contrast with what occurs with symmetric and tensor powers, not every element of $h_\lambda(E^{2})$ is an extreme point of $\mathbb{S}_{\lambda}(P)$. \begin{center}
\begin{figure}[h]
\scalebox {0.5}{
\includegraphics{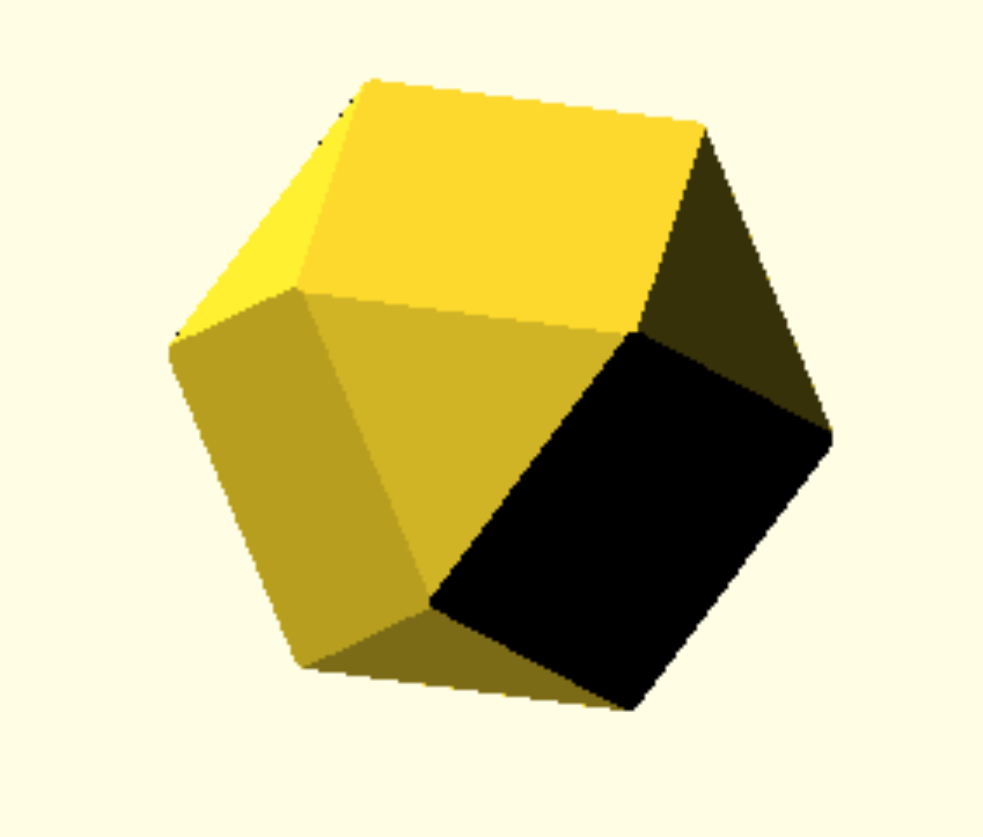}
}
\caption{The second exterior power of the cube $P$}
\end{figure}
\end{center}
\end{example}
It is an interesting problem to determine the facial structure of the compact sets obtained by applying Schur functors, even in the special case of polytopes.

\end{document}